\documentclass[12pt]{amsart}

\usepackage{amssymb, amsmath, amsthm, amsfonts}
\usepackage{hyperref}
\usepackage{xcolor,graphicx}

\theoremstyle{plain}
\newtheorem{theorem}{Theorem}[section]

\newtheorem{lemma}[theorem]{Lemma}

\newtheorem{proposition}[theorem]{Proposition}

\theoremstyle{definition}

\newtheorem{example}[theorem]{Example}

\def\h{\mathbb H}
\def\r{\mathbb R}

\begin{document}
\title{The inverse curve shortening flow on the hyperbolic plane }
\author{Ivan Krznari\'{c}}
\address{Faculty of Economics and Business. University of Zagreb}
\email{ikrznaric@efzg.hr}
\author{Rafael L\'opez}
\address{Departamento de Geometr\'{\i}a y Topolog\'{\i}a Universidad de Granada 18071 Granada, Spain}
\email{rcamino@ugr.es}

\subjclass[2020]{35J60,   53E10, 53C21, 53C44, 53C50, 58J05.}  
\keywords{inverse curvature flow, soliton, hyperbolic plane, parabolic, conformal}

\begin{abstract}
We study the inverse curve shortening flow in the hyperbolic plane $\h^2$. We classify all solitons with respect to parabolic  and conformal vector fields of $\h^2$. In the upper half-plane model of $\h^2$, we prove that parabolic solitons are all graphs on the $y$-axis, whereas  conformal solitons are graphs on the $x$-axis. We study the concavity of these solitons and when they approach the coordinate axes.
\end{abstract}

\maketitle

%%%%%%%%%%%%%%%%%%%%%%%%%%%
\section{Introduction}

Let $\gamma\colon I\to\h^2$ be a curve in the hyperbolic plane $\h^2$, parametrized by arc length. If $J\subset\r$ is an interval with $0\in J$, a one-parameter family of parametrized curves $\{\gamma_t\colon I\to\h^2, t\in J\}$ evolves by the  inverse curve shortening flow (for abbreviation, ICSF) with initial condition $\gamma$,   if 
\begin{equation}\label{flow}
\left\{
\begin{split}
\left(\frac{\partial \gamma_t}{\partial t}\right)^\perp&= - \frac{1}{\kappa(t,\cdot )} N(t,\cdot ), \\ 
\gamma_0&=\gamma. \end{split}\right.
\end{equation}
 where $\kappa(t,\cdot)$ and $N(t,\cdot)$ denote the curvature and the unit normal vector of $\gamma_t$, respectively, and $ \left(\frac{\partial \gamma_t}{\partial t}\right)^\perp$ is the normal component of the velocity $ \gamma_t$. In arbitrary dimension, the inverse mean curvature flow (IMCF) in Euclidean space $\r^{n+1}$ was first studied by Huisken, who proved that compact, mean convex, star-shaped hypersurfaces converge to round spheres under the flow \cite{ge}: see also \cite{cd,ch}. Non-compact solitons of the IMCF are known to exist \cite{dhw,hi}. 
The IMCF  has been considered when the ambient space is    the hyperbolic space: \cite{bda,ge2,ge3,hw}, and it has been   extended to other ambient spaces \cite{cl,kp,lw,lu,ns,pi1,pi2,sc,zh}

In this paper, we aim to provide additional results about non-compact solutions to the ICSF in the hyperbolic plane $\h^2$ by studying solitons with respect to parabolic and conformal vector fields. After introducing some basic facts of the upper half-plane model of $\h^2$ (Section \ref{sec2}), we present explicit examples of the ICSF for curves of constant curvature (Section \ref{sec3}).  The last two sections of the paper are devoted to the study of solitons of the ICSF with respect to  specific vector fields. Consider the upper half-plane model of $\h^2$ with standard coordinates $(x,y)$. In Section \ref{sec5}, we classify the  solitons of the ICSF corresponding to the parabolic vector field $\partial_x$. These solitons are all graphs on the $y$-axis; some of them approach the ideal boundary of $\h^2$ orthogonally, while others have horizontal asymptotes. In Section \ref{sec6}, we classify solitons of the ICSF corresponding to the conformal vector field $\partial_y$.  These solitons are concave graphs on bounded intervals on the $x$-axis with infinite slope at the end points.

%%%%%%%%%%%%%%%%%%%%%%%
\section{Preliminaries} \label{sec2}
%%%%%%
In this paper, we work with the upper half-plane model of the hyperbolic plane $\h^2$, namely, 
$$\r^2_+= \left\{ (x,y) \in \r^2\colon y > 0\right\}$$
 endowed with the Riemannian metric 
 $$\langle,\rangle = \frac{dx^2 + dy^2}{y^2}.$$
 The ideal boundary $\h^2_\infty=\{y=0\}\cup\{\infty\}$ is the one-point compactification of the horizontal line $y=0$.

Let $\gamma\colon I\to\r^2_+$ be a regular curve, parametrized as $\gamma(s)=(x(s),y(s))$. There is a well-known relationship between 
the curvature $\kappa$ of the curve $\gamma$ in $\r^2_+$ with the hyperbolic metric, and its Euclidean curvature $\kappa_e$, since the hyperbolic metric is conformal to the Euclidean one. The unit normal vector $N$ and the hyperbolic curvature $\kappa$ of $\gamma$ are given by 
\begin{eqnarray}
N&=&y\frac{(-y',x')}{\sqrt{x'^2+y'^2}},\label{ke1}\\
\kappa&=&y\kappa_e+\frac{x'}{\sqrt{x'^2+y'^2}},\qquad \kappa_e=\frac{x'y''-x''y'}{(x'^2+y'^2)^{3/2}}.\label{ke2}
\end{eqnarray} 
As a consequence, in the upper half-plane model of $\h^2$, curves of constant hyperbolic curvature coincide with the curves of constant Euclidean curvature in $\r^2_+$.  Thus, they are precisely the intersections of straight lines and Euclidean circles with $\r^2_{+}$. The classification of these curves according to the curvature $\kappa$ is the following:  
\begin{enumerate}
\item Geodesics ($\kappa=0$). These are vertical lines and half-circles orthogonal to the line $y=0$.  
\item Equidistant lines ($0<\kappa<1$). These are non-geodesics arcs of circles supported on the line $y=0$ and non-vertical, non-horizontal straight lines.  
\item Horocycles ($\kappa=1$). These are circles tangent to the line $y=0$ or horizontal lines.   
\item Hyperbolic circles  ($\kappa>1$). These are Euclidean circles entirely contained in $\r^2_+$.  
\end{enumerate}

We now introduce the notion of a soliton of the ICSF by a Killing vector field. Let $X\in\mathfrak{X}(\h^2)$ be a Killing vector field. Then there is a one-parameter group of isometries $\Phi\colon\h^2\times\r\to\h^2$, $\Phi=\Phi(p,t)$,  such that 
\begin{equation}\label{ft}
\frac{d}{dt}\Phi(p,t)=X(\Phi(p,t)),\quad \mbox{and}\quad \Phi(p, 0)=p,
\end{equation}
for all $p\in\h^2$ and $t\in\r$.

Let $\{\gamma_t\colon I\to\h^2\colon t\in J\}$ be  a solution of the ICSF \eqref{flow}. We say that $\gamma=\gamma_0$ that is \textit{a soliton} with respect to   $X$ if $\Phi^{-1}\circ\gamma_t$ is stationary in the normal direction. Since each $\Phi_t$ is an isometry of $\h^2$, we have $N(t,s)=\Phi(N(s),t)$, where $N(s)$ denotes the unit normal vector of $\gamma_0$. Therefore,

\begin{equation*}
\begin{split}
\frac{1}{\kappa(t,s)}&=- \langle \frac{\partial \gamma_t}{\partial t}, N(t,s)\rangle \\
		       & = -\langle \frac{d}{dt}\Phi(\gamma_0,t),  \Phi(N(s),t)\rangle \\
		       & =- \langle X(\gamma_t), \Phi(N(s),t) \rangle.
\end{split}
\end{equation*}
Since this identity holds for all $t\in J$, setting $t=0$ gives 
$$\frac{1}{\kappa(s)}=-\langle X(\gamma_0),N(s)\rangle.$$
We have therefore proved the following characterization of solitons of the ICSF with respect to a Killing vector field, which we will be using throughout the paper.

\begin{proposition}
Let $X\in\mathfrak{X}(\h^2)$ be a Killing vector field. A curve $\gamma$ is a soliton of the ICSF with respect to $X$ if and only if 
\begin{equation}\label{soliton}
\frac{1}{\kappa}=-\langle N,X\circ\gamma\rangle,
\end{equation}
where $\kappa$ and $N$ are the curvature and the unit normal vector of $\gamma$, respectively.
\end{proposition}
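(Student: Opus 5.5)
The forward implication is essentially the computation displayed just before the statement, and I would organize the proof around it. The converse needs an explicit solution of the ICSF built from the group of isometries of $X$.

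\emph{Forward direction.} Assume $\gamma$ is a soliton, so $\gamma_t=\Phi_t\circ\gamma$ up to tangential reparametrization. Here $\Phi_t=\Phi(\cdot,t)$ is the flow of $X$ from \eqref{ft}. Only the normal component of the velocity enters \eqref{flow}, and a tangential reparametrization changes only the tangential part. So I may take $\gamma_t(s)=\Phi(\gamma(s),t)$ exactly. Then $\partial_t\gamma_t=X(\gamma_t)$. Each $\Phi_t$ is an isometry, so it maps the unit normal of $\gamma$ to the unit normal of $\gamma_t$ (after fixing orientations consistently, using continuity in $t$) and preserves curvature: $\kappa(t,s)=\kappa(s)$. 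Pairing \eqref{flow} with $N(t,s)$ gives $1/\kappa(t,s)=-\langle X(\gamma_t),N(t,s)\rangle$. Setting $t=0$ yields \eqref{soliton}.

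\emph{Converse.} Suppose $\gamma$ satisfies \eqref{soliton}, and define $\gamma_t=\Phi_t\circ\gamma$. I would verify that this family solves \eqref{flow} for all $t$. Since $d\Phi_t$ is a linear isometry, $N(t,s)=d\Phi_t(N(s))$ and $\kappa(t,s)=\kappa(s)$. Since $\Phi_t$ commutes with the flow of $X$, we have $X(\Phi_t(p))=d\Phi_t(X(p))$. Hence
\[
\langle \partial_t\gamma_t,N(t,s)\rangle=\langle d\Phi_t X(\gamma(s)),d\Phi_t N(s)\rangle=\langle X(\gamma(s)),N(s)\rangle=-\frac{1}{\kappa(s)}=-\frac{1}{\kappa(t,s)}.
\]
So the normal component of the velocity is $-N(t,s)/\kappa(t,s)$, and $\gamma_0=\gamma$. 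By construction $\Phi_t^{-1}\circ\gamma_t=\gamma$ is stationary, so $\gamma$ is a soliton.

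\emph{Main obstacle.} The delicate point is the identity $X\circ\Phi_t=d\Phi_t\circ X$. It follows from the group property $\Phi_{t+\tau}=\Phi_t\circ\Phi_\tau$ by differentiating in $\tau$ at $\tau=0$. A second point is the implicit hypothesis $\kappa\neq0$, needed for \eqref{flow} and \eqref{soliton} to make sense; I would state it as a standing assumption. For orientation, I would choose $N(t,\cdot)$ as the image of $N$ under $d\Phi_t$. This is the unit normal compatible with the orientation of $\gamma_t$ because $\Phi_t$ is isotopic to the identity and therefore orientation preserving.
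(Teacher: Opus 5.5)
Your proof is correct, and your forward direction is the paper's own computation. The paper likewise takes $\gamma_t=\Phi_t\circ\gamma$, uses that $\Phi_t$ carries $N$ to $N(t,\cdot)$, writes $1/\kappa(t,s)=-\langle X(\gamma_t),N(t,s)\rangle$, and sets $t=0$.

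The difference is that the paper stops there. It only shows that a soliton must satisfy \eqref{soliton}, and the ``if'' direction of the stated equivalence is never argued. Your converse fills that hole. The ingredient it needs beyond the paper's computation is the equivariance $X\circ\Phi_t=d\Phi_t\circ X$. Together with $N(t,\cdot)=d\Phi_t N$ and $\kappa(t,\cdot)=\kappa$, this shows that the identity at $t=0$ holds for every $t$. Hence $\Phi_t\circ\gamma$ is an explicit solution of \eqref{flow} with $\Phi_t^{-1}\circ\gamma_t=\gamma$, which is exactly what the definition of soliton requires.

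A small simplification is available in the forward direction. You need not reduce to $\gamma_t=\Phi_t\circ\gamma$ exactly: a family that is stationary in the normal direction can slide along itself, so it need not be a global reparametrization of $\gamma$. Instead, write $\gamma_t=\Phi_t\circ c_t$ with $\partial_t c_t$ tangent to $c_t$. The extra term $d\Phi_t(\partial_t c_t)$ in $\partial_t\gamma_t$ is then tangent to $\gamma_t$, so the normal velocity at $t=0$ is $\langle X\circ\gamma,N\rangle N$ directly.
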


The space of Killing vector fields on $\h^2$ is three-dimensional. A basis is given by 
\begin{equation}\label{kv}
\begin{split}
 X_1(x,y)&=\partial_x,\\
 X_2(x,y)&=x\partial_x+y\partial_y,\\
 X_3(x,y)&=(y^2-x^2)\partial_x+2xy\partial_y,
 \end{split}
\end{equation}
where $\{\partial_x,\partial_y \}$ denotes the canonical basis of  $\mathfrak{X}(\h^2)$. These vector fields correspond to one-parameter subgroups of isometries of $\h^2$. In complex notation $z=x+iy\in\r^2_+\subset\mathbb{C}$, and taking into account \eqref{ft}, the description of these subgroups is the following.
\begin{enumerate}
\item Parabolic translations. $\mathcal{P}=\{P_t\colon t\in\r\}$, where $P_t(z) = z + t$. Then 
$$X_1(z)=\frac{d}{dt}{\Big|}_{t=0}P_t(z).$$
\item Hyperbolic translations. $\mathcal{H}=\{H_t\colon t\in\r\}$, where $H_t(z) = e^{t}z$. Then 
$$X_2(z)=\frac{d}{dt}{\Big|}_{t=0}H_t(z).$$ 
\item Rotations. $\mathcal{R}=\{ R_t\colon t\in\r\}$, where $R_t(z) = \frac{\cos(t)z - \sin(t)}{\sin(t)z +\cos(t)}$. Then $$X_3(z)=\frac{d}{dt}{\Big|}_{t=0}R_t(z).$$
\end{enumerate}
 Following this terminology, we say that a curve $\gamma$ is a \emph{parabolic} (resp. \emph{hyperbolic}, \emph{rotational}) soliton of the ICSF if $\gamma$ is a soliton with respect to the vector field $X_1$ (resp. $X_2$ and $X_3$).

In the space $\mathfrak{X}(\h^2)$,  there is also a distinguished  vector field given by 
$$Y(x,y)=\partial_y.$$
This vector field is conformal, since its Lie derivative satisfies 
$$\mathcal{L}_{\partial_y}\langle,\rangle=-\frac{2}{y}\langle,\rangle.$$
 Observe that the integral curves of $Y$ are geodesics of $\h^2$, represented  by vertical lines. If we define $\Phi\colon\h^2\times\r\to\h^2$ by $\Phi(x,y,t)=(x,y+t)$, then $\frac{d}{dt}\Phi(x,y,t)=\partial_y$. 
Consider now a curve $\gamma(s)=(x(s),y(s))$ in $\h^2$.  We say that $\gamma$ is a soliton with respect to the vector field $Y$ if $\gamma_t(s)=\Phi(\gamma(s),t)=(x(s),y(s)+t)$ is a solution of \eqref{flow}.  Then, we have 
\begin{eqnarray*}
N(t,s)&=&(y(s)-t)\frac{(-y'(s),x'(s))}{\sqrt{x'(s)^2+y'(s)^2}}, \\
\kappa_t(s)&=&(y(s)-t)\kappa_e(s)+\frac{x'(s)}{\sqrt{x'(s)^2+y'(s)^2}}.
\end{eqnarray*} 
Since $ \frac{\partial \gamma_t}{\partial t}=(0,1)$, then 
$$\langle \frac{\partial \gamma_t}{\partial t},  N(t,s)\rangle = \frac{ x'(s)}{(y(s)-t)\sqrt{x'(s)^2+y'(s)^2}}.$$
Thus, \eqref{flow} is 
$$\frac{x'(s)}{y(s)+t} = -\frac{x'(s)^2+y'(s)^2}{x'(s)+(y(s)+t)\kappa_e\sqrt{x'(s)^2+y'(s)^2}}.$$
In particular, for $t=0$, we have that $\gamma$ satisfies
\begin{equation}\label{conformal}
\frac{1}{\kappa}=-\langle N,\partial_y\circ\gamma\rangle,
\end{equation}
In the theory of MCF for surfaces in hyperbolic space $\h^3$, solitons of   conformal vector fields  were studied in \cite{bl1,ma}. See also  \cite{bl2} when the ambient space is $\h^2\times\r$.

%%%%%%%%%%%%%%%%%%%%%%%
\section{Examples of the ICSF: case of constant curvature} \label{sec3}
%%%%%%
 
We now present several examples of the ICSF for curves of constant curvature. Note that the evolution of a geodesic under ICSF is not defined, since its curvature satisfies $\kappa=0$. Also, we say that the solution $\gamma = \gamma(t,s)$ of the ICSF is an \textit{ancient solution} if it is defined on the time interval of the form $\langle - \infty, T \rangle$, where $0 \leq T \leq \infty$. In other words, ancient solutions are defined for all times $t < 0$. 

\begin{example}[Flow of hyperbolic circles]
\label{ex:circle}
A hyperbolic circle $\gamma$ with center $(a,b) \in \r^2_+$ and radius $R$ coincides with the Euclidean circle whose center is located at $(a, \cosh(R))$ and whose radius is equal to $b\sinh(R)$. After applying an isometry of $\h^2$, we may assume, without loss of generality, that $(a,b)=(0,1)$. A parametrization of $\gamma$ is 
$$\gamma(s)=\sinh(R) \cdot (\cos(s),\sin(s)) + (0,\cosh(R)).$$
The curvature of $\gamma$ is $\kappa=\coth(R)$.

For the ICSF, it is natural to assume that the flow remains within the family of circles centered at $(0,1)$. Thus we consider the evolving family of curves
$$\gamma_t(s) = \sinh(r(t)) \cdot (\cos(s), \sin(s)) + (0,\cosh(r(t))).$$ 
We compute the terms appearing in the flow equation \eqref{soliton}. The curvature is given by $\kappa=\coth(r(t))$, while the unit normal vector $N$ and the time derivative $ \frac{\partial \gamma_t}{\partial t}$ satisfy
\begin{equation*}
\begin{split}
N &=- (\sinh(r)\sin(s) + \cosh(r)) \cdot (\cos(s),\sin(s)),\\
\frac{\partial \gamma_t}{\partial t} &= r' \cdot (\cosh(r)\cos(s),\sinh(r)+\cosh(r)\sin(s)).
\end{split}
\end{equation*}
Hence 
$$  \langle\left(\frac{\partial \gamma_t}{\partial t}\right)^\perp, N \rangle = -r'(t),$$
and so the equation \eqref{soliton} reduces to the following Cauchy problem:
\begin{equation*}
\begin{cases}
\displaystyle r' =  \frac{1}{\coth(r)}, \\
r(0) = R.
\end{cases}
\end{equation*}
The solution is $$r(t) = \sinh^{-1}(\sinh(R)e^{t}).$$
Since $r(t)\geq 0$ for all $t$, the ICSF exists for all $t \in \r $. In particular, this is an ancient solution of the ICSF. As $\lim_{t \to \infty} r(t) = \infty$, it follows that the hyperbolic circle expands infinitely under the ICSF. On the other hand, as $\lim_{t \to - \infty} r(t) = 0$ it follows that hyperbolic circles shrink to a point as $t \to -\infty$.
\end{example} 

\begin{example}[Flow of horocycles]
\label{ex:circle2}
Let $\gamma$ be the horocycle given by the Euclidean circle
$$\gamma(s)=R ( \cos(s),\sin(s))+(0,R),$$
which is tangent to the line $y=0$ at $(0,0)$. Assuming that the ICSF preserves the family of horocycles with ideal point $(0,0)$, we consider the evolving curves
 $$\gamma_t(s) = r(t) \cdot (\cos(s), \sin(s))+(0,r(t)).$$ 
Here $\kappa=1$, and 
$$\langle\left(\frac{\partial \gamma_t}{\partial t}\right)^\perp,  N \rangle = -\frac{r'(t)}{r(t)}.$$ 
Thus \eqref{soliton} becomes
\begin{equation*}
\begin{cases}
r' = r, \\
r(0) = R.
\end{cases}
\end{equation*}
The solution is $$r(t) = R\cdot e^t.$$
The ICSF is defined for all time $t\in\r$, so this is also an ancient solution of the ICSF. Since $\lim_{t\to\infty}r(t) = \infty$, we see that the horocycles also expand infinitely under the ICSF. 
\end{example}

\begin{example}[Flow of equidistant lines]
\label{ex:circle3}
Let $0<c<R$. Consider the equidistant line given by the arc of the Euclidean circle 
$$\gamma(s)=R\left( \cos(s),\sin(s)\right)+(0,c),\qquad -\frac{c}{R}\leq \sin s\leq 1.$$
Its curvature is $\kappa=\frac{c}{R}$. We assume that the ICSF remains within the family of equidistant lines sharing the same ideal boundary point. Let $0<c(t)<r(t)$, and impose the condition 
$$r(t)^2-c(t)^2=R^2-c^2:=a^2,\quad a>0.$$
Define
$$\gamma_t(s) = r(t) \cdot (\cos(s),\sin(s))+(0, \sqrt{r(t)^2-a^2} ).$$ 
Then 
$$\kappa=\frac{\sqrt{r(t)^2-a^2}}{r(t)},\qquad \langle\left(\frac{\partial \gamma_t}{\partial t}\right)^\perp , N \rangle =- \frac{r'(t)}{\sqrt{r(t)^2-a^2}}.$$
Thus \eqref{soliton} reduces to the same ODE as before: 
\begin{equation*}
\begin{cases}
r' = r, \\
r(0) = R.
\end{cases}
\end{equation*}
Hence $$r(t) = Re^t.$$ 
The condition $r(t)^2>a^2$ requires the solution to satisfy
$$t > t_{\text{min}} := \log\left(\frac{a}{R}\right).$$

Since $t_{\text{min}} > - \infty$, this solution is not an ancient one. Also, note that at $t_{\text{min}}$ we have $r(t_{\text{min}}) = a$. Thus, as $t \to t_{\text{min}}$ the family $\gamma_t$ converges to the geodesic $s \mapsto a(\cos(s), \sin(s))$, which is precisely the geodesic to which the initial curve $\gamma$ is equidistant to. On the other hand, as $t \to \infty$ the equidistant line expands infinitely. 
\end{example} 

We summarize the previous examples in the following statement. 

\begin{proposition}
Let $\gamma$ be a curve in $\h^2$ with constant curvature $\kappa>0$.  
\begin{enumerate}
\item If $\gamma$ is a hyperbolic circle, then the ICSF is formed by hyperbolic circles sharing the same center as $\gamma$.   
\item If $\gamma$ is a horocycle, then the ICSF is formed by horocycles sharing the same ideal boundary point as $\gamma$. 
\item If $\gamma$ is an equidistant line, then the ICSF is formed by equidistant lines sharing the same ideal boundary point as $\gamma$. 
\end{enumerate}
Furthermore, the ICSF of all of the curves above is expanding, while the backward time limits are the common center, the ideal boundary point and a geodesic curve respectively.
\end{proposition}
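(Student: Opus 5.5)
The plan is to reduce each case, using isometry invariance of the ICSF, to the normalized curves of Examples \ref{ex:circle}, \ref{ex:circle2} and \ref{ex:circle3}. Then the ansatz family there is a genuine solution, and I would invoke uniqueness to conclude it is \emph{the} flow. Since the flow \eqref{flow} is defined through the intrinsic quantities $\kappa$, $N$ and the hyperbolic metric, if $\{\gamma_t\}$ solves the ICSF then so does $\{\Phi\circ\gamma_t\}$ for any isometry $\Phi$ of $\h^2$. An isometry sends a hyperbolic circle to a hyperbolic circle with the image center, a horocycle to a horocycle with the image ideal point, and an equidistant line to an equidistant line with the image ideal endpoints. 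By the transitivity properties of the group generated by $\mathcal P,\mathcal H,\mathcal R$, I may therefore assume $\gamma$ is exactly one of the model curves of the three examples. For instance, a horocycle can be moved to one tangent to $y=0$ at the origin: if it is a horizontal line, first apply a rotation $R_t$ to bring $\infty$ to a finite point.

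Next I would check that the ansatz families really satisfy \eqref{flow}, and not merely a scalar reduction. The key observation is that each family consists of curves with constant curvature, and the normal speed $-\langle\partial_t\gamma_t,N\rangle$ computed in the examples is independent of $s$. This holds because the family is invariant under the isometries fixing the center (respectively the ideal point or the pair of endpoints), which act transitively on each curve. Then the normal component of the velocity equals $-\frac1\kappa N$ exactly when the ODE $r'=\tanh r$ (respectively $r'=r$) holds, and these were solved explicitly. Tangential components are irrelevant, since \eqref{flow} only prescribes the normal part.

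To assert that the ICSF \emph{is} this family, I need uniqueness of solutions to \eqref{flow} up to tangential reparametrization. I would argue this by writing the evolving curve as a normal graph over the explicit solution, $\tilde\gamma_t=\gamma_t+u\,N$. This gives a scalar equation whose linearization is $u_t = \kappa^{-2}(u_{ss}+\ldots)$, which is uniformly parabolic wherever $\kappa>0$ stays bounded away from $0$. Standard parabolic uniqueness then applies: in the compact circle case, or in the noncompact cases with growth control, where I would restrict to solutions within the symmetric class or use maximum-principle comparison with the barriers given by the explicit family itself. This uniqueness step is the main obstacle, especially for noncompact horocycles and equidistant lines. If a full proof is too heavy, I would state the result for solutions invariant under the stabilizer subgroup of the curve, where uniqueness reduces to that of the ODE.

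Finally, the asymptotics can be read off the explicit formulas. For circles, $r(t)=\sinh^{-1}(\sinh R\,e^t)$ tends to $\infty$ as $t\to\infty$ and to $0$ as $t\to-\infty$, so the curves shrink to the center. For horocycles, $r=Re^t$ tends to $0$ as $t\to-\infty$, so the Euclidean circles collapse to the ideal point $(0,0)$. For equidistant lines, $r\downarrow a$ as $t\downarrow t_{\min}$, and the arcs converge to the geodesic $a(\cos s,\sin s)$. In all three cases the curves expand without bound as $t\to\infty$.
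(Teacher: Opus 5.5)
Your proposal follows the paper's argument. The proposition merely summarizes Examples \ref{ex:circle}, \ref{ex:circle2} and \ref{ex:circle3}: there a model curve of each class is evolved within the family of concentric circles, horocycles with the same ideal point, or equidistant lines with the same endpoints, \eqref{flow} reduces to $r'=\tanh r$ or $r'=r$, and the expansion and backward limits are read off the explicit solutions, exactly as you do.

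The only difference is that the paper never proves uniqueness; it simply assumes the flow stays in that family, which is your fallback. Your parabolic-uniqueness sketch therefore goes beyond the paper, and as you suspect it is incomplete as stated. The noncompact cases need growth conditions, and identifying the flow for $t<0$, where the backward limits live, would require backward rather than forward uniqueness.
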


 %%%%%%%
 \section{Parabolic solitons of ICSF}\label{sec5}
%%%%%%%%%%%%%%%%%%%%%%%%%%

In this section, we study parabolic solitons of the ICSF. We point out that two-dimensional parabolic solitons of the MCF  were already classified in \cite{bl1}. 

Let $\gamma(s)=(x(s),y(s))$ be a curve in $\h^2$, parametrized by Euclidean arc length. Then there is a smooth function $\theta=\theta(s)$ such that 
$$x'(s)=\cos\theta(s),\qquad y'(s)=\sin\theta(s).$$
Notice that $\kappa_e=\theta'$ and from \eqref{ke2}, we have 
$$\kappa=y\kappa_e+\cos\theta.$$
Moreover, $N=y(-\sin\theta,\cos\theta)$, so 
$$\langle N,X_1\rangle=-\frac{\sin\theta}{y}.$$
Equation \eqref{soliton} is therefore equivalent to

\begin{equation}
\label{para1}
\begin{cases}
x' = \cos\theta \\
y' = \sin\theta \\
\displaystyle \theta' =  \frac{1}{\sin\theta} - \frac{\cos\theta}{y}.
\end{cases}
\end{equation}

A consequence of the fact that $\sin\theta(s)$ cannot take the value $0$ is the following.

\begin{proposition}\label{prg}
 Any parabolic soliton to the ICSF is a graph on the $y$-axis.
\end{proposition}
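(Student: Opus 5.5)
The plan is to show that $y$ is strictly monotone along the soliton, so that the curve can be reparametrized by $y$ and written as $x=f(y)$ over an interval of the $y$-axis. The key input is the soliton equation \eqref{soliton} itself, as opposed to the system \eqref{para1} derived from it. Since $\gamma$ is a soliton of the ICSF, its curvature $\kappa$ never vanishes; otherwise $1/\kappa$, and hence the flow \eqref{flow}, would not be defined. By the computation preceding \eqref{para1}, equation \eqref{soliton} reads
$$\frac{1}{\kappa}=\frac{\sin\theta}{y}.$$
The left-hand side is finite and nonzero at every point, so $\sin\theta(s)\neq 0$ for all $s\in I$. This is the same fact that makes the third equation of \eqref{para1} meaningful.

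Next I will use continuity. Because $\theta$ is smooth on the interval $I$ and $\sin\theta$ never vanishes, $\sin\theta$ has a constant sign on $I$. We then have $y'(s)=\sin\theta(s)$, which is either strictly positive on $I$ or strictly negative on $I$. In either case $y$ is a strictly monotone smooth function of $s$ with nonvanishing derivative. It is therefore a diffeomorphism from $I$ onto an interval $I'\subset(0,\infty)$, with smooth inverse $s=s(y)$.

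Finally I will set $f(y)=x(s(y))$ for $y\in I'$. The trace of $\gamma$ is then $\{(f(y),y)\colon y\in I'\}$, which is a graph on the $y$-axis. The chain rule gives
$$f'(y)=\frac{x'(s)}{y'(s)}=\cot\theta(s(y)),$$
which is smooth and finite.

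There is essentially no obstacle here. The only point needing care is to justify that $\sin\theta\neq0$, and I will derive it from the nonvanishing of $\kappa$ in \eqref{soliton}, not assume it. Connectedness of the parameter interval $I$ then supplies the constant sign of $y'$.
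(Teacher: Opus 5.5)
Your proof is correct and follows the paper's own route. The paper derives the statement in one line from the fact that $\sin\theta$ cannot vanish (since $1/\kappa=\sin\theta/y$, as encoded in the third equation of \eqref{para1}), which makes $y'=\sin\theta$ of constant sign so that $y$ is strictly monotone; you have simply written out the connectedness and reparametrization steps the paper leaves implicit.
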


Our first result establishes the fact that parabolic solitons of the ICSF cannot have constant curvature.

\begin{proposition}
There are no parabolic solitons of the ICSF with constant curvature.
\end{proposition}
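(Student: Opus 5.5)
The plan is to assume that $\gamma$ is a parabolic soliton with constant curvature $\kappa=c$ and derive a contradiction. By the classification in Section \ref{sec2}, $\gamma$ is then (part of) a Euclidean line or circle in $\r^2_+$. The soliton equation \eqref{soliton}, in the form coming from \eqref{para1}, reads
$$\frac{1}{c}=\frac{\sin\theta}{y},\qquad\text{i.e.}\qquad y=c^{-1}\sin\theta .$$
Note that $c>0$ is forced, since $1/\kappa$ must be defined; in particular geodesics are excluded from the start. Differentiating this identity along the curve, using $y'=\sin\theta$, gives
$$\sin\theta=c^{-1}\cos\theta\,\theta' .$$
I will combine this with the third equation of \eqref{para1}, or equivalently with $c=\kappa=y\theta'+\cos\theta$.

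Now substitute $y=\sin\theta/c$ into $c=y\theta'+\cos\theta$. This gives
$$\theta'=\frac{c(c-\cos\theta)}{\sin\theta}.$$
On the other hand, the differentiated relation gives
$$\theta'=\frac{c\sin\theta}{\cos\theta}$$
wherever $\cos\theta\neq0$. Equating the two expressions,
$$(c-\cos\theta)\cos\theta=\sin^2\theta=1-\cos^2\theta,$$
which simplifies to $c\cos\theta=1$. Hence $\cos\theta$ is constant, so $\theta'=0$. But then the differentiated relation forces $\sin\theta=0$, contradicting Proposition \ref{prg} (or the fact that $\sin\theta\neq0$ in \eqref{para1}).

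It remains to handle the points where $\cos\theta=0$. If $\cos\theta=0$ on an open interval, then $\theta$ is constant there, and the differentiated relation again gives $\sin\theta=0$, a contradiction. Otherwise, by continuity the argument above applies on the open dense set where $\cos\theta\neq0$, and that is enough. I can also cross-check by inspecting the explicit constant-curvature curves. For a non-horizontal line one has $\theta$ constant, so $y$ must be constant, which is impossible. For a circle, $y$ would have to be proportional to $\sin\theta$; this fails for every Euclidean circle, because on a circle $\sin\theta$ is an affine function of $x$, not of $y$.

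The main obstacle is the bookkeeping of signs and the treatment of degenerate points, namely where $\cos\theta=0$ and the choice of orientation giving $c>0$. The algebra itself is short, so I would present the differentiation argument as the main proof.
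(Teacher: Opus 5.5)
Your argument is correct, and it takes a different route from the paper's. The paper uses the fact that constant hyperbolic curvature means constant Euclidean curvature $\theta'$, and then splits into two cases:
\begin{itemize}
\item $\theta'=0$, the line $s(\cos\theta_0,\sin\theta_0)+(a,b)$;
\item $\theta'=c\neq0$, the circle $\frac1c(\sin cs,-\cos cs)+(x_0,y_0)$.
\end{itemize}
It substitutes each explicit parametrization into the third equation of \eqref{para1} and gets a contradiction by comparing coefficients of $1,s$, respectively $1,\sin cs,\cos cs$.

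You never use the classification of constant-curvature curves. You combine the soliton identity $\kappa=y/\sin\theta$ (with $\kappa$ constant) with $y'=\sin\theta$ and $\kappa=y\theta'+\cos\theta$. This gives $\kappa\cos\theta=1$, which forces $\theta$ to be constant and hence $\sin\theta=0$. Your route is shorter and avoids both the casework and the normalization $\theta(s)=cs$. It is also purely local: it rules out constant curvature on any subarc. The paper's version is more concrete but depends on the Section 2 classification. Your ``cross-check'' is essentially the paper's proof in compressed form.

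There are two small slips, neither of which affects the conclusion.
\begin{itemize}
\item The relation $1/c=\sin\theta/y$ gives $y=c\sin\theta$, not $y=c^{-1}\sin\theta$. This is harmless: for $y=k\sin\theta$ with any constant $k$, the differentiated relation is $k\cos\theta\,\theta'=\sin\theta$. Then $\kappa\cos\theta=k\sin\theta\cos\theta\,\theta'+\cos^2\theta=\sin^2\theta+\cos^2\theta=1$ regardless of $k$.
\item Requiring $1/\kappa$ to be defined only forces $c\neq0$, not $c>0$, since the sign depends on orientation. But $c\neq0$ is all you use.
\end{itemize}

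Finally, you can drop the discussion of points where $\cos\theta=0$. At such a point $\sin\theta=\pm1$, and the differentiated relation $k\cos\theta\,\theta'=\sin\theta$ would give $\sin\theta=0$. So $\cos\theta$ never vanishes, and the open-dense-set argument is unnecessary.
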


\begin{proof} 
Let $\gamma(s)=(x(s),y(s))$ be a parabolic soliton with constant curvature. Then $\gamma$ defines $\theta(s)$ such that $(x(s),y(s),\theta(s)($ is a  solution of system \eqref{para1}. Then the Euclidean curvature $\kappa_e = \theta'$ is constant. We distinguish between two cases:
\begin{enumerate}
\item Case $\theta' = 0$. Then $\theta(s) = \theta_0$ for some constant $\theta_0\in\r$, and the soliton $\gamma$ can be written as 
 $$\gamma(s) = s  (\cos\theta_0 , \sin\theta_0 ) + (a,b),$$
 with $a,b\in\r$ and  $\sin\theta_0\not=0$. 
Substituting into the last equation of \eqref{para1}, we obtain 
$$ s \sin\theta_0 +b = \sin\theta_0   \cos\theta_0,$$
which implies $\sin\theta_0 = 0$, but this is a contradiction. Therefore, this case is impossible.

\item Case $\theta' = c\not=0$. Then $\theta(s) = cs$ and the soliton can be written as 
$$\gamma(s) = \frac{1}{c} (\sin(cs), -\cos(cs)) + (x_0,y_0).$$
Substituting into \eqref{para1} yields
$$ c y_0 \sin(cs) + \frac{\cos(cs)}{c} - y_0=0.$$
Since this holds for all $s\in\r$, we deduce $1/c=0$, which is a contradiction. Therefore, this case is impossible as well. 
\end{enumerate}
\end{proof}

Now we want to study geometric properties of parabolic solitons. Let $\gamma(s)=(x(s),y(s))$ be a solution of \eqref{para1}. Note that the function $x$ is completely determined by $\theta$ so it suffices to solve \eqref{para1} for functions $y,\theta$. Therefore, the system \eqref{para1} is equivalent to the system

\begin{equation}
\label{para2}
\begin{cases}
y' = \sin\theta \\
\displaystyle \theta' =  \frac{1}{\sin\theta} - \frac{\cos\theta}{y},
\end{cases}
\end{equation}

Since the system \eqref{para2} is   independent of $x$,   we can prescribe the initial condition $x(0)$ without loss of generality. Moreover, there are no equilibrium points.

In order to determine the phase space associated to \eqref{para2}, we first note the following result, which is a standard consequence of the uniqueness theorem for ODEs,  and which tells us that we can consider $\theta$ to be contained within the interval $(0,\pi)$.

\begin{proposition}\label{pr52}
If $\alpha(s) = (y(s),\theta(s))$ is a solution of \eqref{para2}, then  $$\overline{\alpha}(s) = (y(-s),\theta(-s) - \pi)$$ is also a solution of \eqref{para2}. 
\end{proposition}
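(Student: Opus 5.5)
The plan is to verify the claim by direct substitution into \eqref{para2}, using only the chain rule and the identities $\sin(\phi-\pi)=-\sin\phi$ and $\cos(\phi-\pi)=-\cos\phi$. Let $\alpha(s)=(y(s),\theta(s))$ be a solution of \eqref{para2} defined on an interval $I$. Set $\overline{y}(s)=y(-s)$ and $\overline{\theta}(s)=\theta(-s)-\pi$ on the reflected interval $-I$.

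First I check that $\overline{\alpha}$ lies in the domain where \eqref{para2} makes sense. Clearly $\overline{y}>0$. Also $\sin\overline{\theta}(s)=-\sin\theta(-s)\neq 0$, since $\sin\theta$ never vanishes along a solution.

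Next, the first equation. The chain rule gives $\overline{y}'(s)=-y'(-s)=-\sin\theta(-s)$, and this equals $\sin(\theta(-s)-\pi)=\sin\overline{\theta}(s)$.

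Then the second equation. On one side,
$$\overline{\theta}'(s)=-\theta'(-s)=-\frac{1}{\sin\theta(-s)}+\frac{\cos\theta(-s)}{y(-s)}.$$
On the other side, by the two trigonometric identities,
$$\frac{1}{\sin\overline{\theta}(s)}-\frac{\cos\overline{\theta}(s)}{\overline{y}(s)}=-\frac{1}{\sin\theta(-s)}+\frac{\cos\theta(-s)}{y(-s)}.$$
The two expressions coincide, so $\overline{\alpha}$ solves \eqref{para2}.

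Geometrically, $\overline{\alpha}$ is the same curve traversed in the opposite direction: reversing orientation turns the tangent angle $\theta$ into $\theta+\pi$ (equivalently $\theta-\pi$), and reparametrizes by $s\mapsto -s$. No step is a real obstacle; the only care needed is the sign bookkeeping in the second equation, where both terms on the right change sign, matching the sign from differentiating $\theta(-s)$.

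For the intended use, I would also remark how the result lets us take $\theta$ in $(0,\pi)$. Since $\sin\theta$ never vanishes and $\theta$ is continuous, $\theta(I)$ lies in a single interval $(k\pi,(k+1)\pi)$. The system is $2\pi$-periodic in $\theta$, so we may shift to $k\in\{0,-1\}$. If $k=-1$, the proposition gives a solution with $\theta\in(0,\pi)$ describing the same curve.
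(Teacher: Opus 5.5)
Your direct substitution is correct and complete: both equations of \eqref{para2} check out with the right sign bookkeeping, and this is exactly the verification the paper leaves implicit, since it states the proposition without proof and attributes it to standard ODE theory (your computation shows no uniqueness argument is actually needed). Your closing remark is also a correct account of how the paper uses the result to restrict to $\theta\in(0,\pi)$, provided the $2\pi$-shift is applied after the proposition.
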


Therefore, the phase space associated to the system \eqref{para2} is equal to the set 
$$\Theta = \left \{ (y, \theta) \colon y > 0, \theta \in (0,\pi) \right\}.$$
To determine geometric properties of parabolic solitons, we are going to be studying the orbits of the system \eqref{para2}. Let $\alpha(s) = (y(s), \theta(s))$ be any solution of the system \eqref{para2} defined on the interval $I = (s_m, s_M)$, which we assume to be maximal.

Each solution $\gamma(s)$ of \eqref{para1}, with its corresponding function $\theta(s)$,  has an associated unique orbit $\alpha_\gamma(s)= (y(s),\theta(s))$ of \eqref{para2}. Standard ODE theory ensures that through every point of $\Theta$ passes a unique orbit, and distinct orbits do not intersect. 
Let us observe that the system \eqref{para2} has no equilibrium point in $\Theta$ and thus, a phase space analysis of the dynamics of the orbits using these points cannot be done.

The points in which the Euclidean curvature vanishes are determined by $\theta' = 0$, and they all lie on the curve 
$$\Gamma\colon y = \frac{1}{2} \sin(2\theta) \subseteq \Theta.$$
 Since $y>0$, the curve $\Gamma$ appears in the phase space $\Theta$ only when $\theta \in (0, \pi/2)$.

Using the curve $\Gamma$, we can determine the regions of the phase space in which the coordinate functions $y,\theta$ of orbit $\alpha$ are strictly monotone. First of all, since $\theta \in (0, \pi)$ we have that $y' = \sin \theta > 0$ so $y$ is strictly increasing on the whole phase space. As for the function $\theta$, we have that
$$\theta' = \frac{1}{\sin\theta} - \frac{\cos\theta}{y} = \frac{y-\frac{1}{2}\sin(2\theta)}{y\cdot \sin\theta}.$$
Thus, the monotonicity regions are

\begin{equation*}
\left\{\begin{split}
& \theta \in (0, \frac{\pi}{2} ) \colon  \left\{\begin{array}{ll}
\theta \text{ increases if and only if} &  y >\frac{1}{2}\sin(2 \theta), \\
\theta \text{ decreases if and only if} &  y < \frac{1}{2}\sin(2 \theta).\\
\end{array}\right.\\
& \theta \in ( \frac{\pi}{2},\pi) \colon\mbox{$\theta$ is increasing}.
\end{split}\right.
\end{equation*}
 
 In the study of solutions of \eqref{para2}, we fix initial conditions. As we have seen,   different choices for  $x(0)$ give the same solution $\gamma(s)=(x(s),y(s))$, up to a horizontal translation. Thus, we can assume, without loss of generality, that $x(0)=0$. We fix the initial condition on $\theta $ to be $\theta(0)=\pi/2$.  Later, we will analyze other initial conditions for the function $\theta = \theta(s)$. The classification is obtained in Theorems \ref{t55} and \ref{t57}, respectively. In both results, the geometric properties of the parabolic solitons  depend on the initial condition $y(0)=y_0$.

%%%%%% 
\subsection{Solutions of \eqref{para1} with initial condition $\theta(0) = \pi / 2$}

Let $\gamma(s)=(x(s),y(s))$ be a solution of \eqref{para1} with initial conditions 
$$x(0)=0,\quad y(0)=y_0,\quad \theta(0)=\frac{\pi}{2}.$$
Let $\alpha_\gamma(s) = (y(s), \theta(s))$ be the corresponding orbit of the system \eqref{para2}, with the initial condition $\alpha_\gamma(0) = (y_0, \pi / 2)$.

\begin{lemma}\label{le54}
    Let $\alpha = (y,\theta)$ be an orbit of the system \eqref{para2} with the initial condition $\alpha(0) = (y_0, \pi/2)$. Then $\lim_{s \to s_M} \alpha(s) = (Y, \pi)$, where $Y < \infty$.
\end{lemma}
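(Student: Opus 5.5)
The plan is to follow the orbit forward from $\alpha(0)=(y_0,\pi/2)$: it enters the region $\theta\in(\pi/2,\pi)$ and stays there, and both coordinates are monotone and bounded there. The standard extension theorem for ODEs then forces $\theta\to\pi$ at a finite $s_M$.

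First, at $s=0$ we have $\theta'(0)=1/\sin(\pi/2)-\cos(\pi/2)/y_0=1>0$, so $\theta(s)>\pi/2$ for small $s>0$. On $\{\pi/2<\theta<\pi\}$ we have $\cos\theta<0$, $0<\sin\theta\le 1$ and $y>0$. Hence
$$\theta'=\frac{1}{\sin\theta}-\frac{\cos\theta}{y}\geq \frac{1}{\sin\theta}\geq 1 .$$
So $\theta$ is strictly increasing for $s\in(0,s_M)$ and cannot return to $\pi/2$. Since the phase space is $\Theta$, we also have $\theta<\pi$ throughout. Integrating $\theta'\geq 1$ from $0$ gives $\theta(s)\ge \pi/2+s$, and since $\theta<\pi$ this yields $s_M\le \pi/2<\infty$.

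Next, $y'=\sin\theta\in(0,1]$, so $y$ is increasing and $y_0\le y(s)\le y_0+s\le y_0+\pi/2$ on $[0,s_M)$. Both coordinates are therefore monotone and bounded as $s\to s_M$, so the limits $Y=\lim_{s\to s_M}y(s)$ and $\theta^*=\lim_{s\to s_M}\theta(s)$ exist. They satisfy $y_0\le Y\le y_0+\pi/2<\infty$ and $\pi/2<\theta^*\le\pi$.

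The remaining point, which I expect to be the only real obstacle, is to show $\theta^*=\pi$. I would argue by contradiction. Suppose $\theta^*<\pi$. Then for $s\in[0,s_M)$ the orbit lies in the compact set
$$K=[y_0,\,y_0+\pi/2]\times[\pi/2,\,\theta^*]\subset\Theta.$$
The right-hand side of \eqref{para2} is smooth, hence bounded and Lipschitz, on a neighbourhood of $K$. The standard continuation theorem then lets the solution be extended past the finite time $s_M$, contradicting the maximality of $I=(s_m,s_M)$. Therefore $\theta^*=\pi$ and $\lim_{s\to s_M}\alpha(s)=(Y,\pi)$ with $Y<\infty$.

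Consistently with this, the $1/\sin\theta$ term makes $\theta'\to+\infty$ as $\theta\to\pi$, which is why the orbit reaches the boundary $\theta=\pi$ of $\Theta$ in finite time.
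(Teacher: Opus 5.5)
Your proof is correct, but it obtains $Y<\infty$ through a different estimate than the paper.

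The paper reparametrizes by $y$ and works with $d\theta/dy=1/\sin^2\theta-\cot\theta/y$. It drops the $1/\sin^2\theta$ term and bounds the other one by $m/y$, with $m=-\cot\bar\theta>0$. The resulting logarithmic growth $\theta(y)-\bar\theta\ge m\log(y/\bar y)$ contradicts $\theta<\pi$ if $Y=\infty$.

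You stay in the arc-length parameter and use exactly the term the paper discards, $\theta'\ge 1/\sin\theta\ge 1$. This gives the explicit bounds $s_M\le\pi/2$ and $Y\le y_0+\pi/2$ at once. Your route buys two things:
\begin{enumerate}
\item It is shorter and quantitative.
\item It supplies $s_M<\infty$, which the final continuation step genuinely needs. The paper's ``extend the orbit with initial condition $(Y,\theta^*)$'' tacitly assumes finite existence time. Otherwise one must add that an interior limit point as $s\to\infty$ would be an equilibrium, and \eqref{para2} has none.
\end{enumerate}

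Your bound also shows that this branch of $\gamma$ has Euclidean length at most $\pi/2$. So it ends at a finite point of height $Y$ with horizontal tangent. This is worth bearing in mind when reading the ``horizontal asymptote'' in Theorem~\ref{t55}.

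The paper's $\theta(y)$ viewpoint has the advantage of being the same device it reuses in Lemmas~\ref{le56} and~\ref{le59}.
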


\begin{proof}
At $s = 0$, we have that $\theta'(0) = 1 > 0$ so there is an interval around $s = 0$ on which the function $\theta$ is increasing ant, thus, for $s>0$ close to $0$,   the orbit $\alpha_\gamma(s)$ is contained in  $\theta \in (\pi/2, \pi)$. This implies that  the functions $y$ and $\theta$ are increasing for $s > 0$. Let $\lim_{s\to s_M}(y(s),\theta(s))=(Y,\theta^\ast)$.

    By Proposition  \ref{prg}, we know that   $\theta$ is a function of $y$, i.e. $\theta = \theta(y)$. Therefore, we have
    $$\frac{d \theta}{dy} = \frac{d \theta}{ds}   \left( \frac{dy}{ds} \right)^{-1} = \frac{1}{\sin\theta}  \left( \frac{1}{\sin\theta} - \frac{\cos\theta}{y} \right) = \frac{1}{\sin^2 \theta} - \frac{\cot\theta}{y}.$$
    Note that $d\theta / dy$ and $d\theta / ds$ have the same sign, so the regions where those functions are monotone coincide.

    Fix some $\overline{y} > y_0$ and denote $\overline{\theta} = \theta(\overline{y})$. Since $d\theta / dy > 0$ on $(y_0, Y)$, we have that $\theta(y)$ is an increasing function on that interval. Because of that we have $\theta(y) \in (\pi/2, \pi)$ for all $y > \overline{y}$ and $\theta(y) > \overline{\theta}$ for all $y > \overline{y}$. Since the function $\theta \mapsto \cot\theta$ is negative and decreasing on $(\pi/2, \pi)$, for all $y > \overline{y}$ we have  $ \cot\overline{\theta} > \cot \theta$. If $m:= - \cot\overline{\theta}>0$, then 
    $$ -\frac{\cot\theta}{y} > \frac{m}{y}.$$
  This yields
    $$\frac{d\theta}{dy} = \frac{1}{\sin^2 \theta} - \frac{\cot\theta}{y} > \frac{m}{y}$$
    for all $ y > \overline{y}$.   Integrating   from $\overline{y}$ to $y$, $y > \overline{y}$, we have $$\theta(y) - \overline{\theta} \geq m\log(y) - m\log(\overline{y}).$$
    If $Y$ were $\infty$, letting $y \to \infty$ we get $\pi \geq \lim_{y \to \infty} \theta(y)\geq \infty$, which is a contradiction. This proves that $Y < \infty$.

    On the other hand, if it were $\lim_{y \to Y} \theta(y) = \theta^\ast < \pi$, then also $\lim_{s \to s_M} \theta(s) = \theta^\ast$. Since $Y < \infty$, we could extend the orbit past $Y$ by considering the initial condition to be equal to $(Y, \theta^\ast)$, but this is impossible since the interval $I$ is maximal. Therefore, $\lim_{s \to s_M} \theta(s) = \lim_{y \to Y} \theta(y) = \pi$.

\end{proof}

 On the other hand, for  $s < 0$ and near $0$, the orbit $\alpha_\gamma(s)$ is contained in the region $\theta \in (0, \pi / 2)$ so there are two possible cases:
\begin{enumerate}
    \item If $\alpha_\gamma$ intersects the curve $\Gamma$ at some $\overline{s} < 0$, then $\theta(s)$ is decreasing on the interval $(s_m, \overline{s})$, here $\lim_{s \to s_m} \alpha_\gamma(s) = (0, \theta_\ast)$.
    \item If $\alpha_\gamma$ does not intersect the curve $\Gamma$, then $\theta(s)$ is increasing on the whole interval $(s_m, 0)$ and so $\lim_{s \to s_m} \alpha_\gamma(s) = (\bar{Y},0)$, with $\bar{Y}>0$.
\end{enumerate}

\begin{figure}[h!t]
\centering
\includegraphics[width=0.6\linewidth]{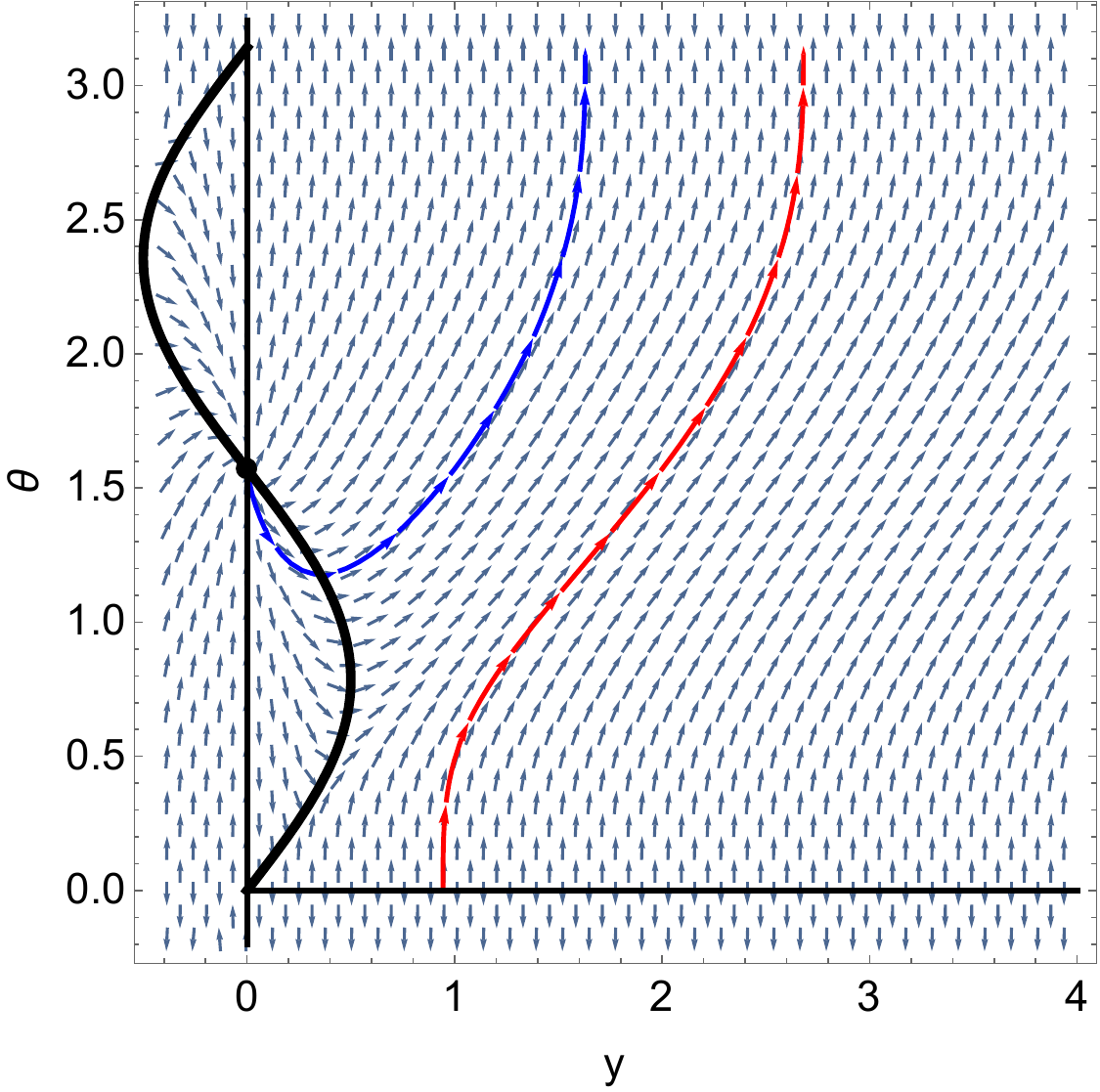}
\caption{Orbits of the system \eqref{para2} varying values of $y(0)$. The black line is the curve $\Gamma$.}
\label{fig:parabolic-orbit-pi-over-2}
\end{figure}
If the orbit $\alpha_\gamma$ intersects (resp. does not intersect) the curve $\Gamma$, then we will refer to that orbit, and the corresponding soliton, as a \textit{Type I} (resp. \textit{Type II})  orbit/parabolic soliton.

The following lemma tells us that if the orbit $\alpha_\gamma$ intersects $\Gamma$ for some $\hat{s}$, then it is contained inside the region determined by $\Gamma$ for all $s \in (s_m, \hat{s})$. 

\begin{lemma}
\label{lem-unique-intersection}
    Let $\alpha_\gamma = (y, \theta)$ be an orbit of the system \eqref{para2} with the initial condition $\alpha_\gamma(0) = (y_0, \pi / 2)$. If $\alpha_\gamma$ is of Type I, then there exists unique $\hat{s} < 0$ such that $\alpha_\gamma(\hat{s}) \in \Gamma$.
\end{lemma}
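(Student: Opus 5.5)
The plan is to turn ``the orbit meets $\Gamma$'' into the vanishing of one scalar function along the orbit, and then use a transversality argument. Define on $\Theta$ the function
$$F(y,\theta)=y-\tfrac12\sin(2\theta),$$
so that $\Gamma=\{F=0\}$. This level set only meets $\Theta$ where $\theta\in(0,\pi/2)$, because $y>0$. Set $f(s)=F(y(s),\theta(s))$ along $\alpha_\gamma$. At $s=0$ we have $f(0)=y_0-\tfrac12\sin\pi=y_0>0$. Since $\alpha_\gamma$ is of Type I, $f$ has at least one zero in $(s_m,0)$, so only uniqueness remains.

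The key computation uses \eqref{para2}:
$$f'(s)=y'-\cos(2\theta)\,\theta'=\sin\theta-\cos(2\theta)\,\theta'.$$
At any zero of $f$ the orbit lies on $\Gamma$, and there $\theta'=0$ because $\Gamma$ is exactly the locus where the Euclidean curvature vanishes. Hence $f'(\hat s)=\sin\theta(\hat s)>0$ at every zero $\hat s$, since $\theta\in(0,\pi)$ in $\Theta$. So every intersection of $\alpha_\gamma$ with $\Gamma$ is transversal, and it always goes in the same direction: from the region $\{F<0\}$, where $\theta$ decreases, into the region $\{F>0\}$, where $\theta$ increases, as $s$ increases.

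Now suppose there were two zeros $s_1<s_2<0$. By the previous step each zero of $f$ is isolated, so the zero set in the compact interval $[s_1,s_2]$ is finite. We may therefore assume $s_1,s_2$ are consecutive zeros, so that $f$ has constant sign on $(s_1,s_2)$. Since $f'(s_1)>0$, $f$ is positive just after $s_1$, hence $f>0$ on all of $(s_1,s_2)$. But $f'(s_2)>0$ forces $f<0$ just before $s_2$, which is a contradiction. Therefore the zero $\hat s$ is unique.

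As a by-product, $f>0$ on $(\hat s,0]$ and $f<0$ on $(s_m,\hat s)$. This means the orbit stays in the region bounded by $\Gamma$ for $s<\hat s$, which is the claim stated informally before the lemma. I do not expect a real obstacle. The only point needing care is the isolation of zeros, so that ``consecutive zeros'' makes sense, and this follows directly from $f'\neq0$ at every zero.
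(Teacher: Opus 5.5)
Your proof is correct, and it takes a different route from the paper's. You show that $\Gamma$ is a one-way barrier for the flow. On $\Gamma$ the field of \eqref{para2} reduces to $(\sin\theta,0)$, since $\theta'$ vanishes exactly there. Pairing it with $\nabla F=(1,-\cos 2\theta)$ gives $\sin\theta>0$, so every crossing is transversal and goes from $\{F<0\}$ to $\{F>0\}$ as $s$ increases. Uniqueness of the crossing follows immediately. So does the sign pattern $f<0$ on $(s_m,\hat s)$ and $f>0$ on $(\hat s,s_M)$, which is exactly the ``orbit stays inside $\Gamma$ for $s<\hat s$'' property invoked before the lemma and used in Lemma \ref{le56}.

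The paper instead takes $\hat s$ to be the largest crossing and supposes a second crossing $\bar s<\hat s$. It asserts that the orbit lies inside $\Gamma$ on $(\bar s,\hat s)$. It then argues by cases on whether the critical point $\bar y$ of $\theta(y)$ is a local maximum or an inflection point, using the monotonicity regions of $\theta$. That argument stays within the phase-portrait language used throughout the section. However, it leaves implicit both the claim that the orbit is inside $\Gamma$ between the two crossings and the extremum/inflection dichotomy for a critical point. Your single derivative computation sidesteps both, and it yields the uniqueness and the side on which the orbit lies at once.
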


\begin{proof}
    We know that $\alpha_\gamma$ meets $\Gamma$  at least one point. Without loss of generality, let   that $\hat{s}$ is the largest value of parameter $s$ such that $\alpha_\gamma(s)$ intersects $\Gamma$. By contradiction, suppose that there is some $\overline{s} < \hat{s}$ such that $\alpha_\gamma(\overline{s}) \in \Gamma$ as well. Denote $\overline{y} = y(\overline{s})$ and $\hat{y} = y(\hat{s})$. Note that $\overline{y} < \hat{y}$ since $y$ is an increasing function, and note that $\alpha_\gamma(s)$ is contained inside the region determined by $\Gamma$ for all $s \in (\overline{s}, \hat{s})$. Because of that, $\theta(y)$ is decreasing on the interval $(\overline{y}, \hat{y})$.

    Since $\frac{d\theta}{dy}(\overline{y}) = 0$, there are two possible cases:

    \begin{enumerate}
	\item  $\overline{y}$ is a local extrema of the function $\theta(y)$. Since $\theta(y)$ is decreasing on the interval $(\overline{y}, \hat{y})$, the point $\overline{y}$ has to be a local maximum. Hence, there exists an interval on which $\alpha_\gamma(s)$ is contained inside the region determined by $\Gamma$ and on which the function $\theta$ is increasing, but this is impossible since $\theta$ is decreasing whenever $\alpha_\gamma$ is inside $\Gamma$. 
	
	\item $\overline{y}$ is an inflection point of the function $\theta(y)$. Hence, for $y < \overline{y}$, the orbit $\alpha_\gamma(y)$ should be outside $\Gamma$ and the function $\theta(y)$ should be decreasing. This is impossible since $\theta$ is increasing in that case.   
    \end{enumerate}
\end{proof}

\begin{lemma}\label{le56}
    Let $\alpha_\gamma = (y, \theta)$ be an orbit of the system \eqref{para2} with the initial condition $\alpha_\gamma(0) = (y_0, \pi/2)$. If $\alpha_\gamma$ is of Type I, then $\lim_{s \to s_m} \alpha_\gamma(s) = (0, \pi/2)$.
\end{lemma}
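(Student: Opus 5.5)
The plan is to follow the orbit backward from the unique crossing point with $\Gamma$ given by Lemma \ref{lem-unique-intersection}. I will show that both coordinates are monotone and bounded there, so the limit exists, and then rule out every candidate limit other than $(0,\pi/2)$, using maximality of $I$ and the equation for $d\theta/dy$ from the proof of Lemma \ref{le54}.

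\textbf{Step 1: monotonicity and existence of the limit.} Let $\hat{s}<0$ be the unique parameter with $\alpha_\gamma(\hat s)\in\Gamma$, as in Lemma \ref{lem-unique-intersection}. By that lemma, for all $s\in(s_m,\hat s)$ the orbit lies strictly inside the region bounded by $\Gamma$, that is,
$$0<y(s)<\tfrac12\sin(2\theta(s)),\qquad \theta(s)\in(0,\pi/2).$$
In this region $\theta'<0$, so $\theta(s)$ increases as $s$ decreases to $s_m$. Also $y'=\sin\theta>0$, so $y(s)$ decreases as $s$ decreases to $s_m$. Both functions are monotone and bounded: $y>0$ and $\theta<\pi/2$. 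Hence the limits
$$y^\ast=\lim_{s\to s_m}y(s),\qquad \theta^\ast=\lim_{s\to s_m}\theta(s)$$
exist. They satisfy $\theta(\hat s)<\theta^\ast\le\pi/2$ and $0\le y^\ast\le\frac12\sin(2\theta^\ast)$.

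\textbf{Step 2: $y^\ast=0$.} Suppose $y^\ast>0$. If $\theta^\ast=\pi/2$, then $y^\ast\le\frac12\sin(2\theta^\ast)=0$, which is a contradiction. So $\theta^\ast\in(0,\pi/2)$, and then $(y^\ast,\theta^\ast)$ is an interior point of $\Theta$. The vector field in \eqref{para2} is smooth there, so the solution extends past $s_m$. This contradicts the maximality of $I$, exactly as in the last paragraph of Lemma \ref{le54}. Hence $y^\ast=0$.

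\textbf{Step 3: $\theta^\ast=\pi/2$ (the main step).} This is the step that needs an estimate rather than a soft argument. Suppose $\theta^\ast<\pi/2$. Write $\theta$ as a function of $y$ on $(0,\hat y]$, where $\hat y=y(\hat s)$, using Proposition \ref{prg}. On this interval $\theta(y)\in[\theta(\hat s),\theta^\ast]\subset(0,\pi/2)$. Therefore there are constants $c,C>0$ with
$$\cot\theta\ge c:=\cot\theta^\ast>0,\qquad \frac{1}{\sin^2\theta}\le C:=\frac{1}{\sin^2\theta(\hat s)}.$$
Using $\frac{d\theta}{dy}=\frac{1}{\sin^2\theta}-\frac{\cot\theta}{y}$, we get
$$\frac{d\theta}{dy}\le C-\frac{c}{y}\qquad\text{on }(0,\hat y].$$
Integrating from $y$ to $\hat y$ gives
$$\theta(\hat y)-\theta(y)\le C\hat y-c\log\frac{\hat y}{y}.$$
As $y\to0$, the right-hand side tends to $-\infty$, so $\theta(y)\to+\infty$. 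This contradicts $\theta<\pi/2$. Hence $\theta^\ast=\pi/2$, and therefore $\lim_{s\to s_m}\alpha_\gamma(s)=(0,\pi/2)$.

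The only delicate point is Step 3, since the limit $y^\ast=0$ lies on the boundary of $\Theta$ and cannot be handled by maximality alone. The key is to keep $\cot\theta$ uniformly bounded away from zero while $\theta^\ast<\pi/2$, so that the singular term $-\cot\theta/y$ dominates and produces the logarithmic divergence.
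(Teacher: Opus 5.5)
Your proposal is correct and follows the paper's proof. Step 3 is the paper's estimate: bound $1/\sin^2\theta\le A$ and $\cot\theta\ge B$ on $(0,\hat y)$, then integrate $d\theta/dy\le A-B/y$ to get a logarithmic divergence, and your inequality $\theta(\hat y)-\theta(y)\le C\hat y-c\log(\hat y/y)$ has the correct orientation, which the paper's displayed version reverses. Steps 1--2 spell out the existence of the limit and $y^\ast=0$, which the paper only asserts; the one thing to add is that $s_m>-\infty$ (because $y'=\sin\theta\ge\sin\theta(\hat s)>0$ on $(s_m,\hat s)$ while $y>0$), so that ``extends past $s_m$'' makes sense.
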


\begin{proof}
    Since $\alpha_\gamma$ is a Type I orbit, and by Lemma {lem-unique-intersection}, there exists a unique $\hat{s} < 0$ such that $\alpha_\gamma(\hat{s}) \in \Gamma$. Denote $\hat{y} := y(\hat{s})$ and $\hat{\theta} := \theta(\hat{s})$. Moreover, we know that  $\alpha_\gamma$ is contained inside the region determined by $\Gamma$ for all $y \in (0, \hat{y})$. Then $\lim_{s \to s_m} \alpha_\gamma(s) = (0, \theta_\ast)$, where $\theta_\ast \in (\hat{\theta}, \pi / 2] \subseteq (0, \pi/2]$.

    Suppose that $\theta_\ast \neq \pi / 2$. By Proposition \ref{prg}, we  can consider $\theta$ as a function of $y$, i.e. $\theta = \theta(y)$ for $y \in (0, \hat{y})$. Since $\alpha_\gamma$ is contained inside $\Gamma$, we know that $\theta(y)$ is a decreasing function on the interval $(0, \hat{y})$. Therefore, we have $\theta(y) \in (\hat{\theta}, \theta_\ast)$ for all $y \in (0,\hat{y})$.

    Since the functions $\theta \mapsto \frac{1}{\sin^2 \theta}$ and $\theta \mapsto \cot\theta$ are positive and continuous on $(0, \pi/2)$, they are in particular bounded on $(0,\hat{y})$, since for $y \in (0, \hat{y})$, we have $\theta(y) \in (\hat{\theta}, \theta_\ast)$ and $\theta_\ast \neq \pi / 2$. So, there are constants $A,B > 0$ such that
    \begin{equation*}
       	\begin{split}
	    \frac{1}{\sin^2 \theta(y)} &\leq A, \quad  y \in (0, \hat{y}) \\
	    \cot(\theta(y)) &\geq B, \quad  y \in (0, \hat{y}).
       	\end{split}
    \end{equation*}

    In particular, for all $y \in (0, \hat{y})$ we have
    $$\frac{d\theta}{dy} = \frac{1}{\sin^2\theta} - \frac{\cot\theta}{y} \leq A - \frac{B}{y}.$$

    Integrating from $y$ to $\hat{y}$, $y < \hat{y}$, we get
    $$\theta(y) - \hat{\theta} \leq A(\hat{y} - y) -B\log(\hat{y}) + B\log(y).$$

    Letting $y \to 0,$ we get $$\theta_\ast - \hat{\theta} \leq - \infty,$$ which is an obvious contradiction. Hence, $\theta_\ast = \pi/2$ and so $$\lim_{s \to s_m} \theta(s) =\frac{ \pi}{2}.$$
\end{proof}

We are in conditions to describe geometrically the parabolic solitons of the ICSF. 

\begin{theorem}\label{t55}
Let $\gamma$ be a parabolic soliton of the ICSF solving \eqref{para1} with initial conditions $(x(0),y(0),\theta(0))=(0,y_0,\pi/2)$. Then $\gamma$ is a graph on the $y$-axis. Moreover, there exists a constant $H > 0$ such that:
\begin{enumerate}
\item  If $0<y_0<H$, then $\gamma$ approaches the line $y=0$ orthogonally, and there exists a constant $Y > 0$ such that the line $y = Y$ is a horizontal asymptote of $\gamma$ (type I);
\item If $y_0>H$, then $\gamma$ is a graph on a bounded interval $(m, M)$ on the $y$-axis (type II). Writing $\gamma$ as $x=x(y)$, then   $x(y)$ is concave and satisfies 
$$\lim_{y\to M}x'(y)=-\infty,\quad \lim_{y\to m}x'(y)=\infty.$$
\end{enumerate}
\end{theorem}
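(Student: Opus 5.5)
The graph property is Proposition \ref{prg}: $y'=\sin\theta>0$ along every solution, so $y$ can be used as the parameter and $\gamma$ is written as $x=x(y)$ with
$$\frac{dx}{dy}=\cot\theta(y).$$
Everything then reduces to the behaviour of the single orbit $\alpha_\gamma$ of \eqref{para2} through $(y_0,\pi/2)$, which is already organized by Lemmas \ref{le54}, \ref{lem-unique-intersection} and \ref{le56} and by the Type I/Type II dichotomy.

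\textbf{Step 1: the threshold $H$.} I would use a monotonicity argument in $y_0$. Orbits cannot cross, and the orbits through $(y_0,\pi/2)$ are ordered by $y_0$ in the strip $\theta\in(0,\pi/2)$. So if the orbit for $y_0$ meets $\Gamma$, then so does every orbit with $y_1<y_0$: such an orbit lies to the left and is trapped between the $y_0$-orbit and the axis $y=0$. Hence the set of Type I values is an interval $(0,H)$, and the set of Type II values is an interval $(H,\infty)$.

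To show both sets are nonempty:
\begin{itemize}
\item For small $y_0$ the orbit starts close to the maximum $1/2$ of $\Gamma$. For $y$ between $y_0$ and $1/2$, $d\theta/dy$ is very negative near $\theta=\pi/2$, so the orbit going backward hits $\Gamma$.
\item For $y_0>1/2$ the orbit never meets $\Gamma$, since $\Gamma\subset\{y\le 1/2\}$ and $y$ decreases backward only down to some limit. This case needs care: I would show that for large $y_0$, $\theta\to0$ before $y$ reaches $1/2$. Near $\theta$ small, $d\theta/dy\approx 1/\theta^2$, so $\theta$ reaches $0$ within a $y$-distance of order $\theta^3$, which is tiny.
\end{itemize}
I expect \textbf{Step 1 to be the main obstacle}: making the ordering and trapping argument rigorous, and identifying $H$ as the supremum of the Type I values. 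The critical orbit at $y_0=H$ would presumably be tangent to or asymptotic with $\Gamma$ at $(0,0)$.

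\textbf{Step 2: Type I.} By Lemma \ref{le56}, $\theta\to\pi/2$ as $y\to0$. Then $dx/dy=\cot\theta\to0$, so the curve reaches $y=0$ vertically, that is, orthogonally to the ideal boundary. For the other end, Lemma \ref{le54} gives $\theta\to\pi$ as $y\to Y<\infty$, so $dx/dy=\cot\theta\to-\infty$. To get a horizontal asymptote I must show $x(y)\to-\infty$, i.e. $\int^Y\cot\theta\,dy=-\infty$. I would study $u=\pi-\theta$ near $y=Y$. Since $d\theta/dy\approx 1/\sin^2\theta\approx 1/u^2$, we get $u^3\sim 3(Y-y)$, so $\cot\theta\sim-(Y-y)^{-1/3}$, which is integrable.

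This computation suggests the integral is finite, contradicting an asymptote in $x$ at the level $y=Y$. So I would recheck which end carries the asymptote. The orbit must be reparametrized carefully, because $s$ is Euclidean arclength and $\theta\to\pi$ means the curve turns horizontal at a finite point. I would then show that the solution continues past $\theta=\pi$ via Proposition \ref{pr52}, and that the asymptotic line $y=Y$ arises from the continued branch as $\theta\to\pi$ is approached with $x\to-\infty$ in the original system. Settling this is a secondary difficulty.

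\textbf{Step 3: Type II.} The orbit stays in the region where $\theta$ is increasing, and $\theta(y)$ runs monotonically from $0$ (at $y=m=\bar Y>0$) to $\pi$ (at $y=M=Y$). Hence $dx/dy=\cot\theta$ decreases from $+\infty$ to $-\infty$. This gives concavity of $x(y)$ together with the stated limits. The interval $(m,M)$ is bounded by Lemma \ref{le54} and case (2) of the preceding discussion.
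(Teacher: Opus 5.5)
Steps 1 and 3 follow the paper's logic. Step 1 does more than the paper, which only asserts that $H$ exists; your non-crossing and trapping argument is the right way to show that the Type I values form an initial interval. The genuine problem is Step 2.

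\textbf{Step 2: the asymptote.} Your computation $u^3\sim 3(Y-y)$ is correct, and there is an even quicker way to see it. For $\theta\in[\pi/2,\pi)$ one has $\theta'=1/\sin\theta+|\cos\theta|/y\ge 1$, so $s_M\le\pi/2$. Since $|x'|\le 1$, the forward end of $\gamma$ has Euclidean length at most $\pi/2$ and stops at a finite point $(x_*,Y)$ with horizontal tangent.

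The rescue you propose cannot work, for two reasons.
\begin{enumerate}
\item Proposition \ref{pr52} sends $(y(s),\theta(s))$ to $(y(-s),\theta(-s)-\pi)$. This is the same curve, up to a horizontal translation, traversed in the opposite direction, so it produces no new branch.
\item At $\theta=\pi$ the field $X_1$ is tangent to $\gamma$. The soliton equation $1/\kappa=\sin\theta/y$ then forces $\kappa\to+\infty$, so $\gamma$ admits no smooth continuation.
\end{enumerate}
The other end carries no asymptote either: in Type I it reaches $y=0$, and in Type II the analogous estimate near $\theta=0$ shows it also ends at a finite point.

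So no argument can yield $x\to-\infty$ along $y=Y$; taken literally, that part of item (1) is false. The paper's proof obtains it only by citing Lemma \ref{le54}, i.e.\ from $y\to Y<\infty$ and $\theta\to\pi$. What is actually provable, and what you should prove, is that this end of $\gamma$ runs into the line $y=Y$ tangentially at a finite endpoint where the curvature blows up.

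\textbf{Step 1: nonemptiness.} These arguments also need repair. Your claim that every $y_0>1/2$ is Type II is false. Your small-$y_0$ sketch is confused: the starting point $(y_0,\pi/2)$ is near the endpoint $(0,\pi/2)$ of $\Gamma$, not near its maximum $(1/2,\pi/4)$. Here is a clean version.
\begin{enumerate}
\item Suppose the orbit from $(y_0,\pi/2)$ misses $\Gamma$. Going backwards, $\theta$ decreases monotonically to $0$. A positive limit $\theta_*$ would allow an extension if $y\to y_*>0$, and would eventually force $y<\tfrac12\sin 2\theta$ if $y\to 0$.
\item Hence the orbit crosses $\theta=\pi/4$ at some $y<y_0$ with $y>\tfrac12$, so every $y_0\le\tfrac12$ is Type I.
\item Orbits cross $\Gamma$ transversally, so the Type I set is open and therefore also contains values $y_0>\tfrac12$.
\item In the other direction, for $y\ge 1$ one has $d\theta/dy=(y-\sin\theta\cos\theta)/(y\sin^2\theta)\ge 1/(2\sin^2\theta)\ge\tfrac12$. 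So if $y_0>1+\pi$, the angle reaches $0$ while $y>1$, and the orbit never meets $\Gamma\subset\{y\le\tfrac12\}$.
\end{enumerate}
Together with your trapping argument, this gives $\tfrac12<H\le 1+\pi$.
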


\begin{proof}
By Proposition \ref{prg}, we know that $\gamma$ is a graph on the $y$-axis. Lemma \ref{le54} implies that one end of $\gamma$ is asymptotic to the horizontal line $y=Y$.
Let $\alpha_\gamma$ be the  orbit  corresponding to $\gamma$. Since an orbit may or may not intersect $\Gamma$, there is $H> 0$ such that $\alpha_\gamma$ intersects $\Gamma$ whenever $y_0 < H$, while for $y_0 > H$  the orbit  $\alpha_\gamma$ and $\Gamma$ are disjoint. 

We now distinguish two cases:
\begin{enumerate}
\item Case   $y_0 < H$. Then  $\alpha_\gamma$ is of Type I. By Lemma \ref{le56}, the limit of $\alpha_\gamma$ as $s\to s_m$ implies that one end of $\gamma$ goes to $y=0$ and that intersection with this axis is orthogonal.

\item Case $y_0>H$. Then $\alpha_\gamma$ is of Type II. We have seen that in such a case,  $\lim_{s \to s_m} \alpha_\gamma(s) = (\bar{Y},0)$, with $\bar{Y}>0$. This implies that one end of $\gamma$ is asymptotic to the line $y=\bar{Y}$. In particular, $\gamma$ is a graph on a  bounded interval $(\bar{Y},Y)$.

To prove that $x=x(y)$ is concave, by the chain rule and \eqref{para1}, we have 
$$x''(y)=-\frac{\theta'}{\sin^3\theta}=-\frac{y-\sin\theta\cos\theta}{y\sin^4\theta}<0$$
because $\alpha_\gamma\cap\Gamma=\emptyset$, and thus, $y>\sin\theta(s)\cos\theta$. This proves that $\gamma$ is a concave graph on the $y$-axis.  Finally, we have 
$$\lim_{y \to Y} x'(y) =\lim_{s\to s_M}\frac{\cos\theta(s)}{\sin\theta(s)}=-\infty,\quad \lim_{y \to \bar{Y}} x'(y) =\lim_{s\to s_m}\frac{\cos\theta(s)}{\sin\theta(s)}=\infty .$$
\end{enumerate}
\end{proof}

Examples of the  types I and II of parabolic solitons to the ICSF are shown in Figure  \ref{fig:parabolic-solitons}.

%%%%%%%%
\subsection{Solutions of \eqref{para1} with initial condition $\theta(0) \neq \pi / 2$}

We describe the orbits $\alpha_\gamma$ with the initial condition $\theta(0) \neq \pi/2$. 

\begin{lemma}\label{le58}
    Let $\alpha_\gamma = (y, \theta)$ be an orbit of the system \eqref{para2} with the initial condition $\theta(0) < \pi /2$. Then $\alpha_\gamma$ can be considered as either Type I or Type II orbit.
\end{lemma}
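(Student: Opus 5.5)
The plan is to show that every orbit starting with $\theta(0)<\pi/2$ meets the horizontal line $\theta=\pi/2$ in the phase space $\Theta$. After a translation of the parameter $s$, such an orbit then coincides with one of the orbits studied in Lemmas \ref{le54}, \ref{lem-unique-intersection} and \ref{le56}. By uniqueness of solutions of \eqref{para2}, and since the system is autonomous, the orbit through $(y_0,\theta(0))$ equals the orbit through $(y_1,\pi/2)$ for some $y_1>0$. That orbit is of Type I or Type II according to whether it meets $\Gamma$.

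Let $(y_0,\theta_0)$ be the initial point, with $\theta_0\in(0,\pi/2)$. There are two cases, depending on the position of this point relative to $\Gamma$.

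\emph{Case 1: $y_0\geq \frac12\sin(2\theta_0)$.} Here I would follow the orbit forward in $s$. Points on $\Gamma$ with $\theta<\pi/2$ have $d\theta/ds=0$. Once the orbit is strictly outside the region bounded by $\Gamma$, both $y$ and $\theta$ increase. Since $y$ increases, the orbit moves away from $\Gamma$ wherever $\Gamma$ is decreasing in $y$. More precisely, I would check that the orbit cannot re-enter the region $y<\frac12\sin 2\theta$ while $\theta<\pi/2$. Along the orbit, $y$ increases strictly, while crossing into that region would require $y-\frac12\sin 2\theta$ to become negative. Where $\theta'=0$ this quantity has derivative $y'=\sin\theta>0$, so it cannot cross zero downward.

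It remains to see that $\theta$ actually reaches $\pi/2$. I would use $d\theta/dy=\frac{1}{\sin^2\theta}-\frac{\cot\theta}{y}$. Suppose $\theta$ stays in $[\theta_0,\pi/2)$ forever. If $y\to\infty$, then eventually $d\theta/dy\geq 1-\cot\theta_0/y\geq 1/2$, which forces $\theta\to\infty$, a contradiction. If instead $y\to Y<\infty$, the orbit has a limit point in the closure of $\Theta$ with $\theta\leq\pi/2$ and $y=Y>0$. The vector field is regular there, so the orbit extends, contradicting maximality. Hence $\theta=\pi/2$ at some $s_1$.

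\emph{Case 2: $y_0<\frac12\sin(2\theta_0)$.} The point lies inside the region bounded by $\Gamma$. Going forward, $\theta$ decreases and $y$ increases. Since $\frac12\sin 2\theta$ is bounded by $1/2$, the orbit must exit that region through $\Gamma$ in finite time. Exit is possible because $d\theta/dy$ is bounded while $\theta$ stays away from $0$; I need to rule out $\theta\to 0$ before exiting. This follows because near $\theta=0$ the region has $y<\theta$ roughly, while $y$ is increasing. After the exit the orbit is in the situation of Case 1, so it reaches $\theta=\pi/2$. Going backward, Lemma \ref{lem-unique-intersection} ensures consistency: the orbit through $(y(s_1),\pi/2)$ meets $\Gamma$, so it is Type I.

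The main obstacle is the forward-escape argument of Case 1, namely ruling out a finite limit $Y$ with $\theta$ stalling below $\pi/2$, together with the no-reentry into $\Gamma$. The delicate point is at the tip of $\Gamma$ near $\theta=\pi/4$, where $\Gamma$ turns. There I would compare the slope $d\theta/dy$ of the orbit with the slope of $\Gamma$: the orbit's slope is $0$ on $\Gamma$, while $\Gamma$ has infinite slope at its vertex.
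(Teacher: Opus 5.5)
Your proposal is correct and follows essentially the paper's argument. It uses the same case split according to the position of $(y_0,\theta_0)$ relative to $\Gamma$, and the same use of $\frac{d\theta}{dy}=\frac{1}{\sin^2\theta}-\frac{\cot\theta}{y}$ to show that $\theta$ cannot stay below $\pi/2$, so the orbit passes through $\theta=\pi/2$ and is then Type I or Type II by definition. Your version is also more careful than the paper's on three points it leaves implicit: you rule out re-entry into the region $y<\frac12\sin 2\theta$ via the sign of the derivative of $y-\frac12\sin 2\theta$ at its zeros, you dispose of the alternative $Y<\infty$ by maximality, and you show that orbits starting inside that region leave it through $\Gamma$ in finite time (there $y<\frac12\sin 2\theta\le\theta$ keeps $\theta>y_0$); that same derivative argument already covers the vertex of $\Gamma$ you flag as delicate.
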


\begin{proof}
    We distinguish between three cases: 
    \begin{enumerate}
	\item If $y(0) < \frac{1}{2}\sin(2\theta(0))$, then $\alpha_\gamma(0)$ is contained inside the region determined by the curve $\Gamma$. Using the same arguments as before, we conclude that $$\lim_{s \to s_m} \alpha_\gamma(s) = (0, \pi/2), \quad \lim_{s \to s_M} \alpha_\gamma(s) = (Y, \pi),$$
		    i.e. $\alpha_\gamma$ is a Type I orbit.
	\item If $y(0) = \sin(2\theta(0))$, then $\alpha_\gamma(0) \in \Gamma$. Since the system \ref{para2} has no equilibrium points, $\theta$ is not a constant function. Hence, $y(0)$ is a critical point of $\theta(y)$, and so $\alpha_\gamma$ is a Type I orbit.

	\item If $y(0) > \sin(2\theta(0)$, then $\alpha_\gamma(0)$ is contained in the outer region determined by the curve $\Gamma$. Because of that, $\theta(y)$ is strictly increasing. We now claim that there exists $\overline{y} > y_0$ such that $\theta(\overline{y}) = \pi / 2$.

	    Assume that $\theta(y) < \pi/2$ for all $y > y_0$. Since $\theta(y)$ is an increasing function, that is only possible if the line $\theta = \pi/2$ is a horizontal asympotote of the function $\theta(y)$, i.e. if
	    $$Y = \infty, \quad \lim_{y \to \infty} \theta(y) = \pi/2, \quad \lim_{y \to \infty} \frac{d \theta}{dy} = 0.$$

	    In that case, we have the following:
$$      	    0 = \lim_{y \to \infty} \frac{d \theta}{dy} = \lim_{y \to \infty} \frac{1}{\sin^2 \theta} - \frac{\cot\theta}{y}   = 1,$$
	    but this is an obvious contradiction. Hence, there exists some $\overline{y} > y_0$ such that $\theta(\overline{y}) = \pi / 2$, so $\alpha_\gamma$ can be considered as an orbit with the initial condition $\theta(0) = \pi/2$. In particular, $\alpha_\gamma$ is either a Type I or Type II orbit.
    \end{enumerate}
\end{proof}

\begin{lemma}\label{le59}
    Let $\alpha_\gamma = (y, \theta)$ be an orbit of the system \eqref{para2} with the initial condition $\theta(0) > \pi/2$. Then $\alpha_\gamma$ satisfies one of the following:
    \begin{itemize}
	\item[i)] $\alpha_\gamma$ is a Type I or a Type II orbit
	\item[ii)] $\theta(y) > \pi/2$ for all $y > 0$ and $$\lim_{s \to s_m} \alpha_\gamma(s) = (0, \pi/2), \quad \lim_{s \to s_M} \alpha_\gamma(s) = (Y, \pi)$$.
    \end{itemize}
\end{lemma}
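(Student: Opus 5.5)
Fix an orbit $\alpha_\gamma=(y,\theta)$ with $\theta(0)>\pi/2$. Since $y$ is strictly increasing, Proposition \ref{prg} lets us write $\theta=\theta(y)$ on $(\lim_{s\to s_m}y,\,Y)$. In the region $\theta\in(\pi/2,\pi)$ we have $d\theta/dy>0$.

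\emph{Forward end.} For $s>0$ the orbit stays in $\theta\in(\pi/2,\pi)$, because $\theta$ increases there. The argument of Lemma \ref{le54} then applies verbatim: pick $\overline y>y(0)$, set $m=-\cot\theta(\overline y)>0$, use $d\theta/dy>m/y$, and argue by maximality. This gives $\lim_{s\to s_M}\alpha_\gamma(s)=(Y,\pi)$ with $Y<\infty$.

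\emph{Backward end.} Follow the orbit for decreasing $s$. Along the way $\theta$ is decreasing and $y$ is decreasing, so there are two alternatives.

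In the first alternative, there is some $\bar s<0$ with $\theta(\bar s)=\pi/2$. Translating the parameter so that $\bar s$ becomes $0$, the orbit is an orbit with initial condition $(y(\bar s),\pi/2)$. By the dichotomy established before Lemma \ref{lem-unique-intersection}, and by Lemmas \ref{le54}--\ref{le56}, it is then of Type I or Type II. This is case i).

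In the second alternative, $\theta(s)>\pi/2$ for all $s\in(s_m,0]$, so $\theta(y)>\pi/2$ along the whole orbit. Let $y_\ast=\lim_{s\to s_m}y(s)\ge 0$ and $\theta_\ast=\lim_{s\to s_m}\theta(s)\in[\pi/2,\theta(0))$; both limits exist by monotonicity.

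We first show $y_\ast=0$. Suppose instead $y_\ast>0$. Then the limit point $(y_\ast,\theta_\ast)$ has $\theta_\ast\ge\pi/2>0$, so it lies in $\Theta$ or on its boundary only at $\theta_\ast=\pi/2$, where $\sin\theta\ne0$. Near this point the vector field of \eqref{para2} is smooth, and the orbit extends past $s_m$, contradicting maximality. Note that $s_m$ is finite, since $y'=\sin\theta$ is bounded below near the limit.

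Next we show $\theta_\ast=\pi/2$. Suppose $\theta_\ast>\pi/2$. Then on $(0,y(0))$ we have $\theta\in(\theta_\ast,\theta(0))$, which is a compact subset of $(\pi/2,\pi)$. Hence $-\cot\theta\ge B>0$ and $1/\sin^2\theta\ge0$, so $d\theta/dy\ge B/y$. Integrating from $y$ to $y(0)$ gives
$$\theta(0)-\theta(y)\ge B\log\frac{y(0)}{y}\to\infty \quad\text{as } y\to 0.$$
This contradicts the boundedness of $\theta$. Therefore $\lim_{s\to s_m}\alpha_\gamma(s)=(0,\pi/2)$, which is case ii).

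The main obstacle is making the maximality argument for $y_\ast>0$ rigorous. This needs a uniform bound on the vector field near the limit point, which is immediate because $\sin\theta$ stays away from $0$ for $\theta\in[\pi/2,\theta(0)]$ and $y$ stays away from $0$. With that bound, the solution has a limit and can be continued.
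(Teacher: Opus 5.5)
Your proof is correct and follows the same route as the paper. You split on whether the orbit meets $\theta=\pi/2$, which reduces to the setting of Theorem~\ref{t55}. Otherwise you rule out $\theta_\ast>\pi/2$ by integrating $d\theta/dy\ge B/y$, which forces a logarithmic blow-up. You also supply two steps the paper leaves implicit: the forward limit $(Y,\pi)$ via the argument of Lemma~\ref{le54}, and $y\to 0$ at the backward end via finiteness of $s_m$ and maximality. One small slip in the latter: $\theta_\ast=\pi/2$ is an interior point of $\Theta$, not a boundary point, which only makes your extension argument easier.
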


\begin{proof}
   	If the orbit $\alpha_\gamma(s)$ intersects the line $\theta = \pi/2$, then $\alpha_\gamma$ can be considered as an orbit with initial condition $\theta(0) = \pi/2$, so $\alpha_\gamma$ is an orbit of Type I or Type II.

	So, let us assume that $\alpha_\gamma$ does not intersect the line $\theta = \pi/2$. Since $\theta(y)$ is an increasing function, it follows that $\lim_{s \to s_M} \alpha_\gamma(s) = (Y, \pi)$.

	On the other hand, we claim that $\lim_{y \to 0} \theta(y) = \pi/2$. In order to prove our claim, suppose that $\lim_{y \to 0} \theta(y) = \theta_\ast > \pi/2$. Fix some $\overline{y} > 0$ and denote $\overline{\theta} = \theta(\overline{y})$. Since $\theta_\ast > \pi/2$, the function $\theta \mapsto \cot\theta$ is negative and continuous on the interval $(0, \overline{y})$ because for all such $y$ we have that $\theta(y) \in (\theta_\ast, \overline{\theta}) \subseteq (\pi/2, \pi)$. So there exists some $m < 0$ such that for all $y \in (0, \overline{y})$ we have
	$$  - \frac{\cot\theta}{y} \geq - \frac{m}{y}.$$

	Because of that, we have the bound
	$$\frac{d\theta}{dy} = \frac{1}{\sin^2(\theta)} - \frac{\cot \theta}{y} \geq - \frac{m}{y}.$$

	Integrating from $y$ to $\overline{y}$, $y < \overline{y}$, we get
	$$\overline{\theta} - \theta(y) \geq -m\log(\overline{y}) + m\log(y).$$

	Letting $y \to 0$, it follows that $\overline{\theta} - \theta_\ast \geq \infty$, which is an obvious contradiction. Hence, $\lim_{y \to 0} \theta(y) = \pi/2$.
\end{proof}

Using the previous two lemmas, we have the geometric description of the parabolic solitons when  $\theta(0)\not=\frac{\pi}{2}$.

  \begin{theorem}\label{t57}
  Let $\gamma$ be a parabolic soliton of the ICSF solving \eqref{para1} with initial conditions $(x(0),y(0),\theta(0))=(0,y_0,\theta(0))$,   with $\theta(0)\not=\frac{\pi}{2}$. Then $\gamma$ belongs to one of the two types described in  Theorem \ref{t55} or $\gamma$ is a concave graph defined at some bounded domain $(0,x_2)$, where the intersection of the branch of $\gamma$ which meets $x$-axis is orthogonal.  
 \end{theorem}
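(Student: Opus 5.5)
The plan is to split according to the sign of $\theta(0)-\pi/2$ and reduce everything to Lemmas \ref{le58} and \ref{le59}. Then the only new geometric work concerns the exceptional orbits in case ii) of Lemma \ref{le59}.

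If $\theta(0)<\pi/2$, Lemma \ref{le58} shows that $\alpha_\gamma$ is a Type I or Type II orbit. In subcase (3) of that lemma the orbit reaches $\theta=\pi/2$ at some $\bar y$. Because \eqref{para2} is autonomous, I shift the parameter so that this point corresponds to $s=0$, and I translate horizontally so that $x(0)=0$. The resulting curve is then a soliton with initial data $(0,\bar y,\pi/2)$, up to a parabolic translation $P_t$, and Theorem \ref{t55} applies directly. In subcases (1) and (2) the orbit lies inside, or touches, the region bounded by $\Gamma$. I will check that such an orbit still crosses $\theta=\pi/2$. This holds because Lemma \ref{le54}'s argument forces $\theta\to\pi$ as $s\to s_M$, so by continuity $\theta$ passes through $\pi/2$. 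The same reparametrization then places $\gamma$ among the curves of Theorem \ref{t55}.

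If $\theta(0)>\pi/2$ and alternative i) of Lemma \ref{le59} holds, I repeat the same reduction. It remains to treat alternative ii), where $\theta\in(\pi/2,\pi)$ along the whole orbit, $\theta\to\pi/2$ as $y\to0$, and $\theta\to\pi$ as $y\to Y<\infty$. Here $\cos\theta<0$, so $x'(s)<0$ and $x$ is strictly monotone in $s$. Since $\sin\theta>0$, the curve is also a graph over $x$. I need to show that its $x$-range is bounded. Writing $dx/dy=\cot\theta$, the range has length $\int_0^Y|\cot\theta(y)|\,dy$. The integrand is bounded near $y=0$ because $\theta\to\pi/2$ there. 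Near $y=Y$ I will use $dy/d\theta = 1/(d\theta/dy)$ and the fact that $d\theta/dy\ge 1/\sin^2\theta$ when $\cot\theta<0$. This gives $|\cot\theta|\,dy\le|\cos\theta|\sin\theta\,d\theta$, which is integrable up to $\theta=\pi$. So the domain is a bounded interval. After a horizontal translation that fixes the endpoint at $y=0$, the domain becomes $(0,x_2)$, with the orientation chosen as in the statement (reflecting if necessary).

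The end at $y=0$ has $\theta\to\pi/2$, so the curve meets the $x$-axis orthogonally. For concavity, I view $\gamma$ as a graph $y=y(x)$ and compute
$$y''(x)=\frac{\theta'}{\cos^3\theta}.$$
In this region $\theta'>0$ because $\theta\in(\pi/2,\pi)$ is the increasing region, and $\cos\theta<0$. Hence $y''<0$ and the graph is concave. At the other end $\theta\to\pi$, so the tangent becomes horizontal and the curve approaches the asymptote $y=Y$.

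I expect the main obstacle to be the bookkeeping of which end is $x=0$ and confirming that the $x$-range is finite near $\theta\to\pi$. The first thing I would try there is the integral estimate above. I would also double-check the reduction in subcases (1) and (2) of Lemma \ref{le58}.
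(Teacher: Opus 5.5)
Your proposal is correct, and its skeleton matches the paper's. The case $\theta(0)<\pi/2$ and alternative i) of Lemma \ref{le59} are reduced to Theorem \ref{t55}. Your parameter shift plus a parabolic translation is the explicit form of the paper's remark that the orbit ``can be considered'' as one with $\theta(0)=\pi/2$. Only alternative ii) is handled directly.

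Where you differ is in how you read the conclusion in that case. The paper obtains concavity ``as in Theorem \ref{t55}'', i.e.\ for $x=x(y)$ via $x''(y)=-\theta'/\sin^3\theta<0$. So its bounded domain is naturally the $y$-interval $(0,Y)$, which starts at $0$ because the branch reaches the $x$-axis, and boundedness is just $Y<\infty$ from Lemma \ref{le59}. You read $\gamma$ as $y=y(x)$. This costs you the extra estimate $\int_0^Y|\cot\theta|\,dy\le\int_{\pi/2}^{\pi}|\cos\theta|\sin\theta\,d\theta=\tfrac12$ on the $x$-range, and you get $y''(x)=\theta'/\cos^3\theta<0$. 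Since the graph is decreasing, $x''(y)$ and $y''(x)$ have the same sign, so the two notions of concavity agree. Your version also gives a bit more, namely bounded extent in both directions.

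Two small corrections. First, no reflection is needed to obtain the form $(0,x_2)$: translating the $y=Y$ end to $x=0$ already does it. Moreover, $x\mapsto -x$ does not preserve \eqref{para1}; it produces solitons for $-\partial_x$. Second, your own bound shows that the $\theta\to\pi$ end is a finite point of the line $y=Y$, so calling $y=Y$ an asymptote is not accurate.

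You may also want to know that alternative ii) of Lemma \ref{le59} is in fact empty. Along such an orbit put $\phi=\theta-\pi/2\in(0,\pi/2)$. Then $\frac{d\phi}{dy}=\sec^2\phi+\frac{\tan\phi}{y}\ge 1+\frac{\phi}{y}$, hence $\frac{d}{dy}\bigl(\frac{\phi}{y}\bigr)\ge\frac1y$. Integrating from $y$ to a fixed $\bar y$ gives $\frac{\phi(y)}{y}\le\frac{\phi(\bar y)}{\bar y}-\log\frac{\bar y}{y}\to-\infty$ as $y\to0$, contradicting $\phi>0$. So every orbit with $\theta(0)>\pi/2$ crosses $\theta=\pi/2$ at a positive height, and the third alternative of the theorem never occurs. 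Both your treatment of that case and the paper's are therefore vacuous; the statement holds because its first alternative always does.
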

 
 \begin{proof} By Lemmas \ref{le58} and \ref{le59}, the parabolic soliton can be of Types I or II of Theorem \ref{t55}. Otherwise, Lemma \ref{le59} implies that $\theta(s)>\pi/2$ for all $y>0$. In such a case, by the properties of $\alpha_\gamma$, we know that $\gamma$ arrives to the $x$-axis and that this intersection is orthogonal. Concavity of $\gamma$ is proved as in Theorem \ref{t55} because $y>\sin\theta\cos\theta$ since $\theta\in (\frac{\pi}{2},\pi)$. 
  
 \end{proof}

The last case in Theorem \ref{t57} is shown in Figure   \ref{fig:parabolic-solitons}, right.

\begin{figure}[hbtt]
 \begin{center}
 \includegraphics[width=.13\textwidth]{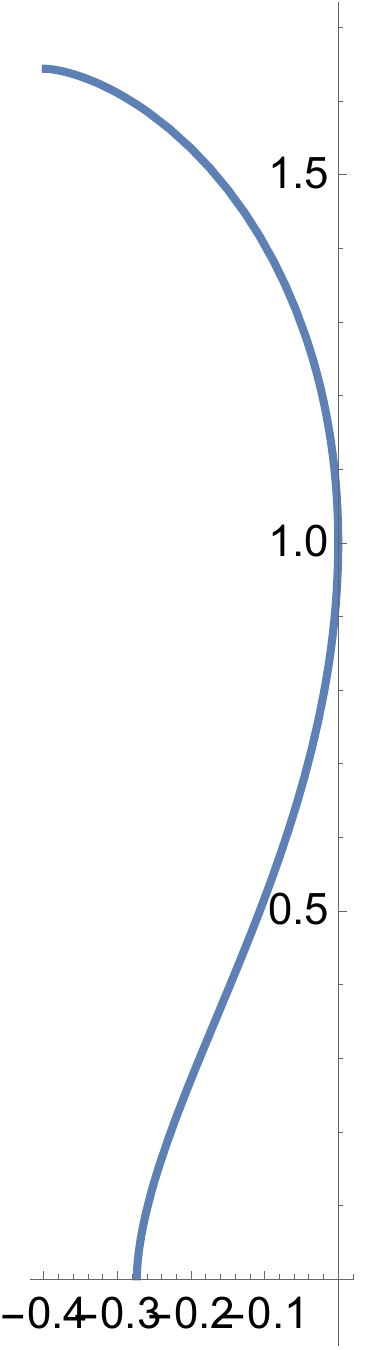}\qquad\qquad \includegraphics[width=.22\textwidth]{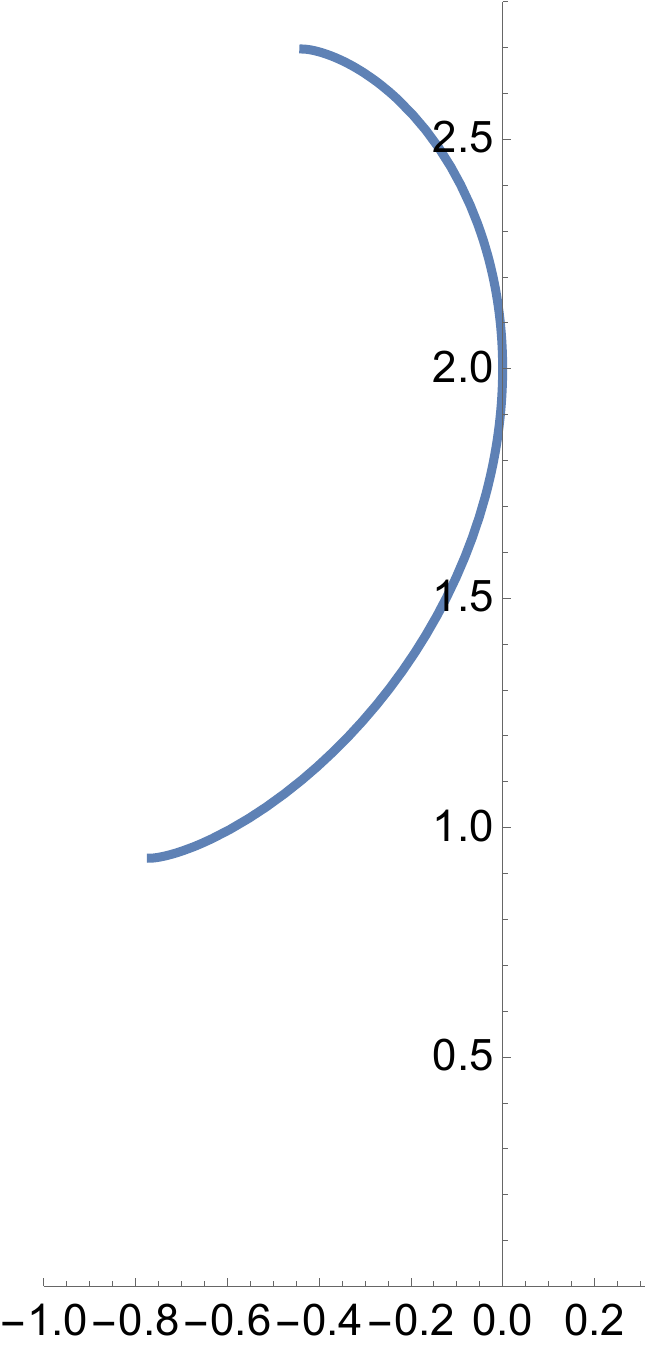}\qquad \qquad\includegraphics[width=.25\textwidth]{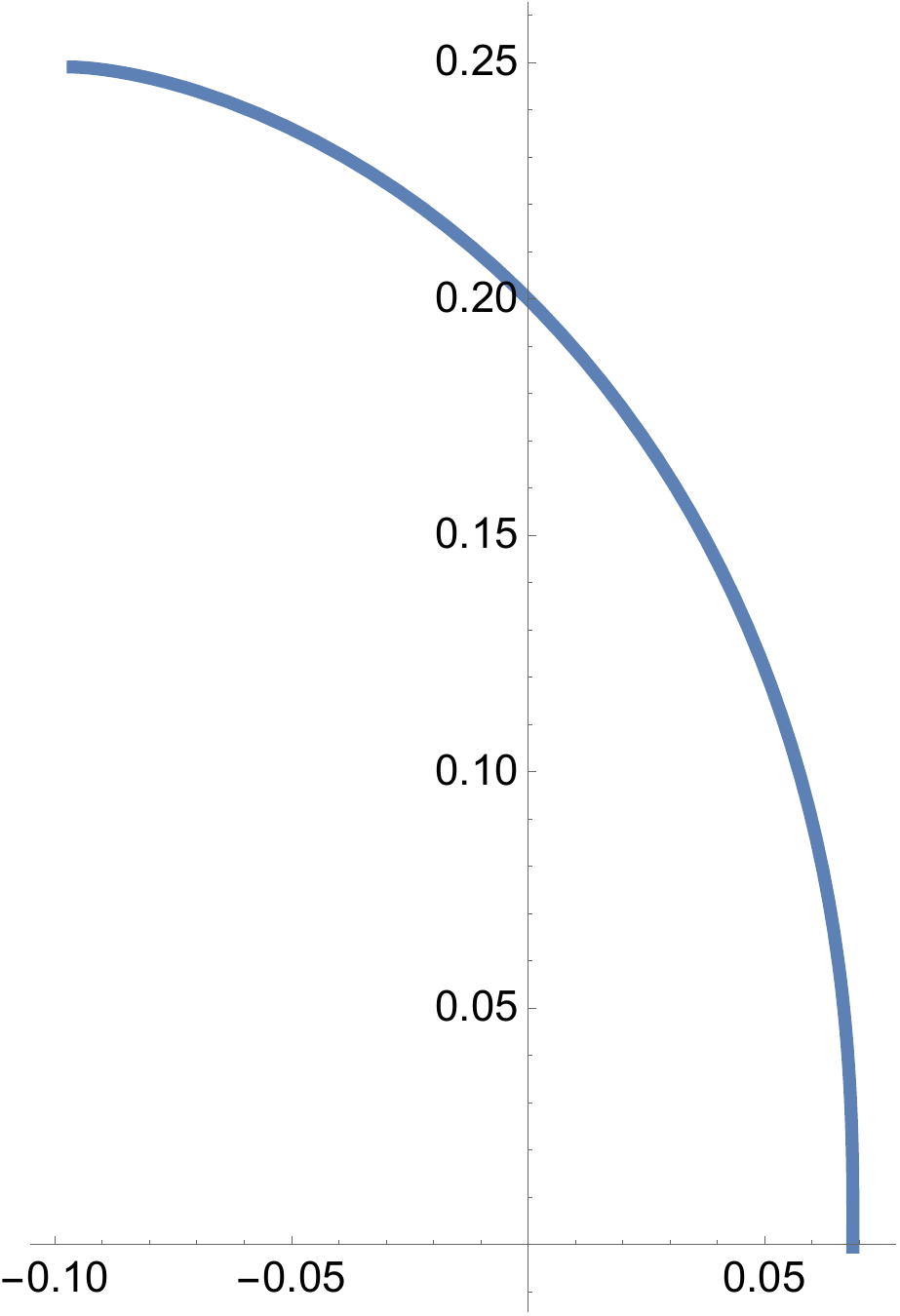}
\end{center}
\caption{Parabolic solitons of the ICSF. Left: $\theta(0)=\frac{\pi}{2}$, $y(0)=1$. Middle: $\theta(0)=\frac{\pi}{2}$, $y(0)=2$. Right: $\theta(0)=\frac{3\pi}{4}$, $y(0)=0.2$. }
\label{fig:parabolic-solitons}
\end{figure}

%%%%%%%%%%%%%%%%%%%%%%%%%%%%%%%%%%%%%%%%%%%%%%%%%%%%%%%%%%%%%%%
\section{Conformal solitons of the ICSF}\label{sec6}
%%%%%%%%%%%%%%%%%%%%%%%%%%%%%%%%%%%%%%%%%%%%%%%%%%%%%%

In this section, we study conformal solitons for the ICSF in $\h^2$. Let $\gamma(s) = (x(s), y(s))$ be a curve in $\h^2$, parametrized by Euclidean arc-length. If $\gamma$ is a conformal soliton of the ICSF, then it is characteried by \eqref{conformal}. Using the same approach as in the previous section, and if $\theta$ is the angle function of $\gamma$, equation \eqref{conformal} is equivalent to the following system
\begin{equation}\label{system-conformal}
\left\{\begin{split}
x'&=\cos\theta\\
y'&=\sin\theta\\
\theta'&=-\frac{1}{\cos\theta}-\frac{\cos\theta}{y}.
\end{split}
\right.
\end{equation}

Firstly, we prove that conformal solitons cannot have constant curvature.

\begin{proposition}
   There are no conformal solitons of the ICSF in $\h^2$ with constant curvature.
\end{proposition}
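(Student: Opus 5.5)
The plan is to argue as in the parabolic case: reduce constant curvature to an explicit parametrization, substitute it into the third equation of \eqref{system-conformal}, and obtain an identity in $s$ that forces a contradiction. Let $\gamma(s)=(x(s),y(s))$ be a conformal soliton of the ICSF with constant hyperbolic curvature, parametrized by Euclidean arc length, and let $\theta$ be its angle function, so that $(x,y,\theta)$ solves \eqref{system-conformal} on some open interval.

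The first step is to pass from constant hyperbolic curvature to constant Euclidean curvature. I will not argue directly from $\kappa=y\kappa_e+\cos\theta$. Instead I will use the classification in Section~\ref{sec2}: curves of constant hyperbolic curvature are exactly pieces of Euclidean lines and circles. Hence $\kappa_e=\theta'$ is a constant $c$. I also record that \eqref{system-conformal} requires $\cos\theta\neq 0$ along the curve, and of course $y>0$.

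Next I treat the two cases.

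\textbf{Case $c=0$.} Then $\theta\equiv\theta_0$ with $\cos\theta_0\neq 0$. The third equation becomes
\[
0=-\frac{1}{\cos\theta_0}-\frac{\cos\theta_0}{y},
\]
that is, $y=-\cos^2\theta_0<0$. This contradicts $y>0$, so this case is impossible.

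\textbf{Case $c\neq 0$.} After a translation of the parameter we may take $\theta(s)=cs$. Integrating the first two equations gives
\[
x=x_0+\frac{\sin(cs)}{c},\qquad y=y_0-\frac{\cos(cs)}{c}.
\]
I will multiply the third equation by $y\cos\theta$ to clear denominators, which gives
\[
c\,y\cos(cs)+y+\cos^2(cs)=0.
\]
Substituting the expression for $y$, the $\cos^2$ terms cancel, leaving
\[
\Bigl(c\,y_0-\frac{1}{c}\Bigr)\cos(cs)+y_0=0
\]
on an open interval of $s$. Since $\cos(cs)$ is nonconstant on any open interval, both coefficients must vanish: $y_0=0$ and $c\,y_0=1/c$. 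These are incompatible because $1/c\neq 0$, so this case is impossible as well.

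The main obstacle is only bookkeeping. One must make sure the curve is really a Euclidean circle or line, so that $\theta'$ is constant; this is why I rely on the Section~\ref{sec2} classification rather than on $\kappa$ directly. One must also keep track of the restriction $\cos\theta\neq 0$, so that the cleared-denominator identity is valid on a genuine open interval where $\cos(cs)$ is nonconstant.
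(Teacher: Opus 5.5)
Your proposal is correct and follows essentially the same route as the paper: you obtain constancy of $\theta'$ from the line/circle characterization, then substitute into the third equation of \eqref{system-conformal}, getting $y=-\cos^2\theta_0<0$ when $c=0$ and the incompatible conditions $y_0=0$, $c\,y_0=1/c$ when $c\neq 0$. In the case $c=0$ you read off $y<0$ directly, whereas the paper first separates coefficients in $s$ to get $\sin\theta_0=0$ and $b=-1$; this is a minor simplification of the same argument.
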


\begin{proof}
   Since the hyperbolic curvature is constant if and only the Euclidean curvature is constant, then the function $\theta'$ is constant.  
   \begin{enumerate}
       \item Case $\theta' = 0$. Then $\gamma$ can be  parametrized as 
       $$\gamma(s) = s   (\cos\theta_0, \sin\theta_0) + (a,b).$$ 
	   The third equation of the system \eqref{system-conformal} is then equivalent to $s   \sin\theta_0 + \cos^2\theta_0 + b=0$. Thus,  	         $\sin\theta_0 = 0$ and $ \cos^2\theta_0 + b = 0$. This gives $b=-1$. However, we have  $\gamma(s)=(s\cos\theta_0+a,b)$ and thus, $b$ should be positive. This is a contradiction.  

       \item Case $\theta' = c \neq 0$. Then $\gamma$ can be  parametrized as $$\gamma(s) = \frac{1}{c} (\sin(cs), -\cos(cs)) + (a, b).$$
	   The third equation of the system \eqref{system-conformal} is then equivalent to  $ \cos(cs)    ( \frac{1}{c} - cb  ) - b=0$. Thus $b=0$ and $\frac{1}{c}-cb=0$. This is a contradiction.  
   \end{enumerate}
\end{proof}

We   study the geometric properties of conformal solitons  using the same approach as in Section \ref{sec5}. A first result is   the following result on the symmetry of conformal solitons, which is analogous to Proposition \ref{pr52}.

\begin{proposition} 
\label{prop:sym}
Let $(x(s), y(s), \theta(s))$ be a solution of the system \eqref{system-conformal} with initial conditions $(x_0, y_0, \theta_0)$. Then   $(\bar{x}(s),\bar{y}(s),\bar{\theta}(s))=(2x_0-x(-s), y(-s), -\theta(-s))$ is also a solution of  \eqref{system-conformal} with initial conditions $(x_0,y_0,-\theta_0)$. 
\end{proposition}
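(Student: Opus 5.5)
The plan is a direct verification: substitute the proposed triple into each equation of \eqref{system-conformal} and check it, then evaluate at $s=0$ for the initial conditions. Write $u=-s$. By the chain rule, differentiating with respect to $s$ gives
$$\bar{x}'(s)=x'(-s),\qquad \bar{y}'(s)=-y'(-s),\qquad \bar{\theta}'(s)=\theta'(-s).$$

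\emph{First equation.} Using $x'=\cos\theta$ and the evenness of cosine, $\bar{x}'(s)=\cos\theta(-s)=\cos(-\theta(-s))=\cos\bar{\theta}(s)$.

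\emph{Second equation.} Using $y'=\sin\theta$ and the oddness of sine, $\bar{y}'(s)=-\sin\theta(-s)=\sin(-\theta(-s))=\sin\bar{\theta}(s)$.

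\emph{Third equation.} The right-hand side $-\frac{1}{\cos\theta}-\frac{\cos\theta}{y}$ depends on $\theta$ only through $\cos\theta$, so it is invariant under $\theta\mapsto-\theta$. Hence
$$\bar{\theta}'(s)=\theta'(-s)=-\frac{1}{\cos\theta(-s)}-\frac{\cos\theta(-s)}{y(-s)}=-\frac{1}{\cos\bar{\theta}(s)}-\frac{\cos\bar{\theta}(s)}{\bar{y}(s)}.$$

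\emph{Initial conditions.} At $s=0$ we get $\bar{x}(0)=2x_0-x_0=x_0$, $\bar{y}(0)=y_0$ and $\bar{\theta}(0)=-\theta_0$. The new solution is defined on the reflected interval $-I$ whenever the original one is defined on $I$.

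There is no real obstacle. The only point needing care is the sign bookkeeping in $\bar{x}'$ and $\bar{\theta}'$: two minus signs (one from the outer sign or negation, one from the chain rule) cancel in each, while $\bar{y}'$ keeps a single minus sign, which is exactly absorbed by the oddness of sine. Geometrically, the new curve is the original reflected across the vertical line $x=x_0$ and traversed in the opposite direction.
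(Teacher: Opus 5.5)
Your proof is correct. Each of the three equations and the initial conditions check out, with the sign cancellations exactly as you describe. The paper states this proposition without proof (as it also does for its parabolic analogue), so your direct substitution is essentially the intended argument; uniqueness for ODEs is only needed later, when the paper uses this symmetry to conclude that orbits with $\theta(0)=0$ are symmetric about $\theta=0$.
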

 
 Note that the function $x(s)$ does not appear in the third equation of \eqref{system-conformal}. Thus, and without loss of generality, we can take $x(0)=0$ as the initial condition. Again, we  study the solutions of the   system
\begin{equation}
\label{system-conformal-reduced}
\begin{cases}
   y' = \sin\theta \\
   \displaystyle \theta ' = - \frac{1}{\cos\theta} - \frac{\cos\theta}{y}
\end{cases}
\end{equation}

In the remainder of this section, if $\gamma$ is a conformal soliton then we will denote by $\alpha_\gamma$ the orbit of the system \eqref{system-conformal-reduced} induced by $\gamma$. The next proposition gives us some properties of the orbits $\alpha_\gamma$, and it is a consequence of uniqueness of ODEs.

\begin{proposition}
    Let $\alpha_\gamma(s)=(y(s),\theta(s))$ be an orbit of the system \eqref{system-conformal-reduced}.
    \begin{enumerate}
	\item $\overline{\alpha_\gamma}(s) = (y(-s), - \theta(-s)$ is also an orbit of the system \eqref{system-conformal-reduced}. Consequently, $\alpha_\gamma$ is symmetric with respect to the line $\theta = 0$.

	\item if $\theta \in (-\pi/2, \pi / 2)$, then $\overline{\alpha_\gamma}(s) = (y(s), -\theta(s) + \pi)$ is an orbit of the system \eqref{system-conformal-reduced} such that $-\theta(s) + \pi \in (\pi / 2, 3\pi/2)$.
       
    \end{enumerate}
\end{proposition}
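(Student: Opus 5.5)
The plan is to verify both items by direct substitution into \eqref{system-conformal-reduced}. The only ingredients are the parity identities $\sin(-\theta)=-\sin\theta$, $\cos(-\theta)=\cos\theta$, $\sin(\pi-\theta)=\sin\theta$ and $\cos(\pi-\theta)=-\cos\theta$. Uniqueness of solutions to the initial value problem then upgrades the statement about solutions to a statement about orbits as sets in the phase plane.

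For item (1), set $\bar y(s)=y(-s)$ and $\bar\theta(s)=-\theta(-s)$. By the chain rule,
$$\bar y'(s)=-y'(-s)=-\sin\theta(-s)=\sin\bar\theta(s).$$
Likewise, $\bar\theta'(s)=\theta'(-s)$, and this equals
$$-\frac{1}{\cos\theta(-s)}-\frac{\cos\theta(-s)}{y(-s)}=-\frac{1}{\cos\bar\theta(s)}-\frac{\cos\bar\theta(s)}{\bar y(s)},$$
because cosine is even. So $\overline{\alpha_\gamma}$ solves the system on the reflected interval $-I$. This is the same computation as in Proposition \ref{prop:sym}, with the $x$-component dropped.

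The geometric consequence is that the reflection $(y,\theta)\mapsto(y,-\theta)$ of the phase plane sends orbits to orbits. Now suppose $\alpha_\gamma$ meets the line $\theta=0$, say at $s_0$, and reparametrize so that $s_0=0$. Then $\alpha_\gamma$ and $\overline{\alpha_\gamma}$ solve the same system with the same initial value $(y(0),0)$. By uniqueness they coincide, so the orbit is invariant under the reflection, that is, symmetric about $\theta=0$. For an orbit not meeting $\theta=0$, the correct reading is that its mirror image is again an orbit. I would state the symmetry claim in this precise form, since it is the only point needing care; everything else is routine.

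For item (2), set $\tilde\theta=\pi-\theta$ and keep $y$. Then $y'=\sin\theta=\sin\tilde\theta$. Also $\tilde\theta'=-\theta'=\frac{1}{\cos\theta}+\frac{\cos\theta}{y}$, and since $\cos\tilde\theta=-\cos\theta$ this equals $-\frac{1}{\cos\tilde\theta}-\frac{\cos\tilde\theta}{y}$. Hence $(y,\pi-\theta)$ solves \eqref{system-conformal-reduced}. Finally, $\theta\in(-\pi/2,\pi/2)$ gives $\pi-\theta\in(\pi/2,3\pi/2)$, where $\cos$ still does not vanish, so the system is defined along the new curve.

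Together, the two items allow one to restrict the phase-space analysis to $\theta\in(-\pi/2,\pi/2)$, and in fact, for orbits meeting $\theta=0$, to a half-strip on one side of it.
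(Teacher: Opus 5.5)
Your proof is correct and follows the paper's argument. The paper gives no written proof beyond calling the proposition a consequence of ODE uniqueness. Your substitutions (parity of $\sin$ and $\cos$ for item (1), and $\sin(\pi-\theta)=\sin\theta$, $\cos(\pi-\theta)=-\cos\theta$ for item (2)), followed by uniqueness at a point where $\theta=0$, are exactly what that remark abbreviates.

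The one thing to tighten is your weakening of the \emph{consequently} clause. The statement asserts that every orbit is symmetric about $\theta=0$, and within $\Theta$ no orbit avoids that line, so you can prove the literal claim.

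\begin{itemize}
\item On $\Theta$ you have $\theta'=-\sec\theta-\cos\theta/y<-1$.
\item Suppose $\theta(0)=\theta_0\in(0,\pi/2)$. While $\theta>0$, $y$ is increasing and $y'\le 1$. So for $0\le s<\theta_0$ the solution stays in the compact set $[y_0,y_0+\theta_0]\times[0,\theta_0]$, which lies inside the domain of the system, and the maximal solution cannot end there.
\item Since $\theta(s)<\theta_0-s$, the orbit reaches $\theta=0$ at some $s<\theta_0$.
\item The case $\theta_0<0$ follows by applying item (1).
\end{itemize}

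Since $\theta$ is strictly monotone, each orbit in $\Theta$ crosses $\theta=0$ exactly once. Your uniqueness argument then gives symmetry for every orbit. This is also the fact the paper needs later for orbits with $\theta(0)>0$, where it uses the symmetric endpoint limit before showing that the orbit reaches $\theta=0$.
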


Therefore, the phase plane of   \eqref{system-conformal-reduced} is  
$$\Theta= \left\{(y,\theta)\colon y>0,\theta\in(-\frac{\pi}{2},\frac{\pi}{2}) \right \}.$$
It is immediate that there are no equilibrium points. Since $\cos\theta(s)\not=0$, then we have the analogous of Proposition \ref{prg}.

\begin{proposition}\label{prg2}
 Any conformal soliton to the ICSF is a graph on the $x$-axis.
\end{proposition}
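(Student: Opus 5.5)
The plan is to argue exactly as for Proposition \ref{prg}: show that along any conformal soliton the angle function never reaches $\pm\pi/2$, so the Euclidean tangent is never vertical, and then read off the graph property from the sign of $x'=\cos\theta$.

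First, let $\gamma(s)=(x(s),y(s))$ be a conformal soliton parametrized by Euclidean arc length, with angle function $\theta$. By \eqref{conformal}, $\gamma$ satisfies the system \eqref{system-conformal}. The left-hand side of \eqref{conformal} is $1/\kappa$, so $\kappa\neq 0$ along $\gamma$. Since $N=y(-\sin\theta,\cos\theta)$ and $\langle N,\partial_y\rangle=\cos\theta/y$, equation \eqref{conformal} reads $\kappa=-y/\cos\theta$. For this to make sense we need $\langle N,\partial_y\rangle\neq0$, that is, $\cos\theta(s)\neq 0$ for every $s$ in the domain. Equivalently, the third equation of \eqref{system-conformal} contains the term $1/\cos\theta$ and is only defined where $\cos\theta\neq0$.

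Second, $\cos\theta$ is continuous and never vanishes on the interval $I$, so it has constant sign. Hence $x'(s)=\cos\theta(s)$ is either strictly positive on $I$ or strictly negative on $I$. In either case $x$ is strictly monotone, so it is a diffeomorphism of $I$ onto an interval of the $x$-axis. Writing $s=s(x)$, the curve becomes $\gamma=(x,y(s(x)))$, which is a graph $y=u(x)$ over that interval. This matches the choice of phase space $\Theta$ with $\theta\in(-\pi/2,\pi/2)$, which was reached after applying the symmetry $\theta\mapsto\pi-\theta$.

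There is no real obstacle here. The only point needing care is to state explicitly that a soliton is, by definition, a solution of \eqref{conformal}, so that $\cos\theta\neq0$ holds along the whole curve and cannot merely fail at some isolated point.
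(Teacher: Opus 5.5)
Your proof is correct and is essentially the paper's argument. The paper only observes that $\cos\theta\neq 0$ along a conformal soliton (forced by the soliton equation $1/\kappa=-\cos\theta/y$, equivalently by the term $1/\cos\theta$ in \eqref{system-conformal}) and concludes ``as in Proposition~\ref{prg}''. You additionally write out the step the paper leaves implicit: $\cos\theta$ has constant sign on the interval $I$, so $x$ is strictly monotone.
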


\begin{lemma}\label{le64}
    Let $\alpha_\gamma = (y,\theta)$ be an orbit of the system \eqref{system-conformal-reduced}. Then $\theta'$ has the same sign on the whole interval $I$.
\end{lemma}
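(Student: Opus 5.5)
Lemma \ref{le64} asks us to show that $\theta'$ never changes sign along an orbit of \eqref{system-conformal-reduced}. The natural approach is to rewrite the third equation over a common denominator and read off the sign. On the phase plane $\Theta$ we have
$$\theta'=-\frac{1}{\cos\theta}-\frac{\cos\theta}{y}=-\frac{y+\cos^2\theta}{y\cos\theta}.$$

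The numerator $y+\cos^2\theta$ is strictly positive, because $y>0$ and $\cos^2\theta\geq 0$. The factor $y$ in the denominator is also positive. Hence the sign of $\theta'(s)$ is exactly the sign of $-\cos\theta(s)$, and $\theta'$ never vanishes.

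It then remains to show that $\cos\theta(s)$ keeps a constant sign on the maximal interval $I$. We know $\cos\theta(s)\neq 0$ for all $s\in I$: otherwise the right-hand side of \eqref{system-conformal} is undefined, and this is exactly the fact used in Proposition \ref{prg2}. The function $s\mapsto\cos\theta(s)$ is continuous on the connected interval $I$ and never zero, so by the intermediate value theorem it has constant sign.

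Concretely, under the normalization $\theta\in(-\pi/2,\pi/2)$ the sign is positive, so $\theta'<0$ throughout $I$. Without the normalization, $\theta$ stays in $(\pi/2,3\pi/2)$ and $\theta'>0$ throughout.

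There is no real obstacle. The only point needing care is the justification that $\cos\theta$ cannot cross zero, which rests on the fact that the system is only defined where $\cos\theta\neq0$ together with continuity of $\theta$ on the connected interval $I$. I would state explicitly that $I$ is an interval and that the solution is $C^1$ there, and then conclude. As a byproduct, $\theta$ is strictly monotone on $I$; this is presumably what the subsequent analysis of conformal solitons uses.
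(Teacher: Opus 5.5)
Your proof is correct and is essentially the paper's argument: the paper also observes that $\theta'=0$ would force $y=-\cos^2\theta\le 0$, i.e.\ that the numerator $y+\cos^2\theta$ cannot vanish, and it leaves the continuity step implicit. You make that intermediate value step explicit and identify the sign as that of $-\cos\theta$, which is a harmless refinement of the same idea.
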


\begin{proof}
If at some point  $s \in I$, we have  $\theta'(s) = 0$, then   the last equation of  \eqref{system-conformal-reduced} gives $ y(s) = - \cos^2(\theta(s)) < 0$, which is not possible.  
\end{proof}

In the following two subsections, we analyze the   orbits (and hence the corresponding solitons) of \eqref{system-conformal-reduced} depending on the initial condition $\theta(0)$.  In Fig.  \ref{fig:conformal-orbit-0}, we show orbits of the system \eqref{system-conformal-reduced}.

%%%%%%%%%%%
\subsection{Solutions of \eqref{system-conformal-reduced} with initial condition $\theta(0) = 0$.}

Let $\alpha_\gamma = (y, \theta)$ be an orbit of the system \eqref{system-conformal-reduced} with initial conditions $(y(0),\theta(0)) = (y_0,0)$.   Denote by $(s_m,s_M)$ the maximal domain of $\alpha_\gamma$. Since $\theta'(0) = -1 - \frac{1}{y_0}$ and from Lemma \ref{le64},   it follows that $\theta$ is a strictly decreasing function. Since  $\theta \in (0, \pi / 2)$,  we have that $y' = \sin\theta > 0$, hence $y(s)$ is an increasing function. Therefore, for  $s < 0$ and near to $0$, the orbit $\alpha_\gamma$ lies contained in the region $\theta \in (0, \pi/2)$.  so $\lim_{s \to s_m} \alpha_\gamma(s) = (y_\ast, \theta_\ast)$, where $\theta_\ast \in (0, \pi/2 ]$. Because of the symmetry, we get $\lim_{s \to s_M} \alpha_\gamma(s) = (y_\ast, - \theta_\ast)$.

\begin{figure}[h]
\centering
\includegraphics[width=0.6\linewidth]{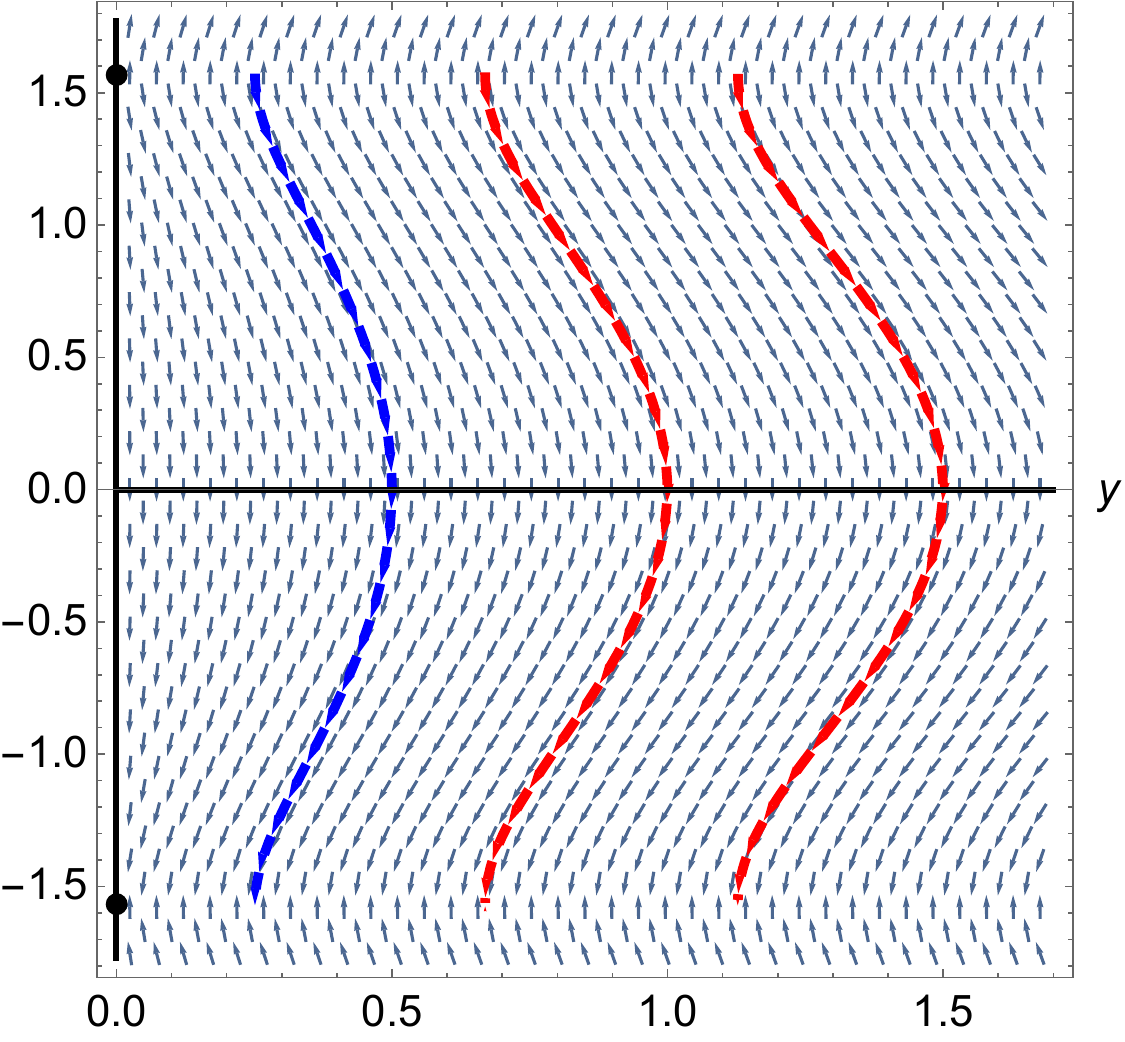}
\caption{Orbits of the system \eqref{system-conformal-reduced}   varying values of  $y(0)$.}
\label{fig:conformal-orbit-0}
\end{figure}

\begin{lemma}
\label{lem-conformal-angles} We claim  $\theta_\ast = \pi/2$.
\end{lemma}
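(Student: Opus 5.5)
Here $\theta_\ast$ is the limit of $\theta(s)$ as $s\to s_m$ for the orbit $\alpha_\gamma$ with $\alpha_\gamma(0)=(y_0,0)$. The plan is to argue by contradiction: assume $\theta_\ast<\pi/2$, rule out $y_\ast>0$ by a continuation argument, and rule out $y_\ast=0$ by an integration estimate of the type used in Lemma \ref{le56}.

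\emph{Setup on $(s_m,0)$.} By Lemma \ref{le64} and $\theta'(0)=-1-1/y_0<0$, the function $\theta$ is strictly decreasing on $I$. Hence $\theta(s)\in(0,\pi/2)$ for $s\in(s_m,0)$, and $\theta$ increases as $s$ decreases to $s_m$. Since $y'=\sin\theta>0$ there, $y$ decreases as $s\to s_m$. Both limits therefore exist, with $\theta_\ast\in(0,\pi/2]$ and $y_\ast\in[0,y_0)$.

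\emph{The starting time $s_m$ is finite.} On $(s_m,0)$ we have
$$\theta'=-\frac{1}{\cos\theta}-\frac{\cos\theta}{y}\le -\frac{1}{\cos\theta}\le -1.$$
Because $\theta$ stays in $(0,\pi/2)$, we get $\pi/2\ge\theta(s)-\theta(0)\ge -s$, so $s_m\ge -\pi/2$.

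\emph{Case $y_\ast>0$.} Suppose $\theta_\ast<\pi/2$ and $y_\ast>0$. Then $(y_\ast,\theta_\ast)$ lies in $\Theta$, where the right-hand side of \eqref{system-conformal-reduced} is smooth. The orbit reaches this point at the finite time $s_m$, so it can be continued past $s_m$ by solving with initial data $(y_\ast,\theta_\ast)$. This contradicts the maximality of $I$, as in the last step of Lemma \ref{le54}.

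\emph{Case $y_\ast=0$.} This is the case I expect to require the real estimate. Since $\sin\theta>0$ on $(s_m,0)$, $y$ is a valid parameter there, and $\theta=\theta(y)$ for $y\in(0,y_0)$ with
$$\frac{d\theta}{dy}=-\frac{1}{\sin\theta\cos\theta}-\frac{\cot\theta}{y}.$$
For $y\in(0,y_0)$ we have $\theta(y)\in(0,\theta_\ast)$. Because $\cot$ is decreasing on $(0,\pi/2)$, this gives $\cot\theta(y)\ge B:=\cot\theta_\ast>0$, which uses $\theta_\ast<\pi/2$ in an essential way. Discarding the first (negative) term yields
$$\frac{d\theta}{dy}\le -\frac{B}{y}.$$
Fix $\bar y\in(0,y_0)$ and integrate from $y$ to $\bar y$:
$$\theta(\bar y)-\theta(y)\le -B\log\bar y+B\log y.$$
As $y\to0$, the right-hand side tends to $-\infty$. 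The left-hand side tends to the finite number $\theta(\bar y)-\theta_\ast$, which is a contradiction.

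\emph{Conclusion.} Both cases are impossible, so $\theta_\ast=\pi/2$. The main point to verify is that the bound on $\cot\theta$ holds uniformly on the whole interval $(0,y_0)$. This follows from the monotonicity of $\theta$ established in the setup.
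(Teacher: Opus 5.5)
Your proposal is correct and follows essentially the same route as the paper. You rule out $y_\ast>0$ by extendability of the maximal solution. Then, writing $\theta=\theta(y)$ and bounding $\cot\theta$ below by a positive constant (your $B=\cot\theta_\ast$, the paper's $-m$), the inequality $d\theta/dy\le -B/y$ forces $\theta$ to blow up as $y\to 0$, and your extra check that $s_m\ge-\pi/2$ makes explicit the finite-time hypothesis that the paper's continuation step leaves implicit.
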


\begin{proof}
   Suppose that $\theta_\ast \in (0, \pi/2)$. Then it must be that $y_\ast = 0$, since otherwise we could extend the solution past $s_m$. Since $y' = \sin\theta \neq 0$, we can write $\theta$ as a function of $y$, i.e. $\theta = \theta(y)$. Then
   $$\frac{d\theta}{dy} = \frac{d\theta}{ds}   \left( \frac{dy}{dy}\right)^{-1} = \frac{1}{\sin\theta}  \left(- \frac{1}{\cos\theta} - \frac{\cos\theta}{y} \right) < 0,$$
   so $\theta = \theta(y)$ is a decreasing function.

   Fix some $\hat{y} < y_0$ and denote $\hat{\theta} = \theta(\hat{y}) > 0$. Also, note that $$\frac{d\theta}{dy} = - \frac{2}{\sin(2\theta)} - \frac{\cot(\theta)}{y}.$$

   Since $\theta_\ast < \pi/2$, we have that $2\theta(y) \in (2\hat{\theta}, 2\theta_\ast)$ for $y \in (0, \hat{y})$. So the function $\theta \mapsto - \frac{2}{\sin(2\theta)}$ is negative on $(0,y_0)$. Also, since $\theta(y) \in (\hat{\theta}, \theta_\ast)$ for all $y \in (0, \hat{y})$, the function $\theta \mapsto -\cot\theta$ is negative and bounded on the interval $(0, \hat{y})$, so there exists some $m < 0$ such that $$-\cot\theta < m, \quad y \in (0, \hat{y}).$$

   Taking these bounds into account, it follows that on the interval $(0, \hat{y})$ we have $$\frac{d\theta}{dy} \leq \frac{m}{y}.$$

   Integrating from $y$ to $\hat{y}$ we get $$\hat{\theta} - \theta(y) \leq m \log(\hat{y}) - m \log(y).$$ Letting $y \to 0$ we get $$\hat{\theta} - \theta_\ast \leq - \infty,$$ which is an obvious contradiction. Therefore, it must be that $\theta_\ast = \pi/2$.
\end{proof}

We now prove the classification theorem for conformal solitons of the ICSF in $\h^2$.

\begin{theorem}
\label{t67}
   Let $\gamma$  be a conformal soliton of the ICSF solving \eqref{system-conformal} with initial conditions $(x(0),y(0),\theta(0))=(0,y_0,\pi/2)$. Then  $\gamma$ is a concave graph on the $x$-axis  over some bounded interval $(x_m, x_M)$, where the limits of the tangents at the endpoints are vertical.
    \end{theorem}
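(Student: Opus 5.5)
Although the statement is written with $\theta(0)=\pi/2$, this value lies outside the phase plane $\Theta$. In the subsection under discussion the initial angle is $\theta(0)=0$, so I read the hypothesis as $(x(0),y(0),\theta(0))=(0,y_0,0)$. The plan is to combine the monotonicity of $\theta$ (Lemma \ref{le64}), the symmetry of Proposition \ref{prop:sym}, and the boundary behaviour from Lemma \ref{lem-conformal-angles}. Then I reparametrize everything by $\theta$ to control the horizontal extent. Proposition \ref{prg2} already gives that $\gamma$ is a graph $y=y(x)$, since $x'=\cos\theta>0$ on $(s_m,s_M)$.

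\emph{Step 1 (limits of the angle).} By Lemma \ref{le64} and $\theta'(0)=-1-1/y_0<0$, $\theta$ is strictly decreasing on $(s_m,s_M)$. Lemma \ref{lem-conformal-angles} gives $\theta(s)\to\pi/2$ as $s\to s_m$. Applying Proposition \ref{prop:sym} with $\theta_0=0$ shows that the solution is its own reflection, so $\gamma$ is symmetric about the vertical line $x=0$ and $\theta(s)\to-\pi/2$ as $s\to s_M$. Consequently the tangent directions $(\cos\theta,\sin\theta)$ tend to $(0,\pm1)$ at both ends, i.e.\ the tangents become vertical. In terms of the graph, $dy/dx=\tan\theta\to\pm\infty$.

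\emph{Step 2 (concavity).} Writing $y=y(x)$ and using \eqref{system-conformal}, the chain rule gives
$$\frac{d^2y}{dx^2}=\frac{\theta'}{\cos^3\theta}=-\frac{y+\cos^2\theta}{y\cos^4\theta}<0,$$
because $y>0$ and $\cos\theta>0$ on $\Theta$. Hence $y(x)$ is concave, with maximum $y_0$ at $x=0$, where $\theta=0$.

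\emph{Step 3 (boundedness of the $x$-interval).} This is the step needing a genuine estimate, since a priori $s_m,s_M$ could be infinite. Because $\theta$ is strictly monotone, I use it as the parameter on $(-\pi/2,\pi/2)$. From \eqref{system-conformal},
$$\frac{dx}{d\theta}=\frac{\cos\theta}{\theta'}=-\frac{y\cos^2\theta}{y+\cos^2\theta},\qquad\text{so}\qquad \Bigl|\frac{dx}{d\theta}\Bigr|\le \cos^2\theta\le 1.$$
Integrating over $\theta\in(-\pi/2,\pi/2)$ gives $x_M-x_m\le\pi/2$, in particular $x_m=-x_M$ is finite. The same device also bounds the arc length: $|ds/d\theta|=1/|\theta'|\le\cos\theta$, so $s_m$ and $s_M$ are finite as well. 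This confirms that the solution really terminates by $\theta$ reaching $\pm\pi/2$ and not by escape.

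\emph{Expected obstacle.} The delicate point is the height $y_*=\lim y$ at the endpoints. The estimate $dy/d\theta\ge-\sin\theta\cos\theta$ only gives $y_*\ge y_0-1/2$, which leaves open whether $y_*=0$ for small $y_0$. Fortunately the theorem does not require $y_*>0$. Concavity, the bounded interval, and the vertical limiting tangents all follow from Steps 1--3 whether the ends stay in $\h^2$ or reach the ideal boundary. I would therefore not try to decide this in the proof, and only remark on it afterwards.
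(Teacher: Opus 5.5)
Your proof is correct and follows the paper's argument: the graph property comes from Proposition~\ref{prg2}, the limiting angles $\pm\pi/2$ from Lemma~\ref{lem-conformal-angles} and the symmetry of Proposition~\ref{prop:sym}, and concavity from the same formula for $y''(x)$; you are also right that $\theta(0)=\pi/2$ is a typo for $\theta(0)=0$. Your Step~3 bound $|dx/d\theta|\le\cos^2\theta$ supplies the boundedness of $(x_m,x_M)$ that the paper only asserts, and your remaining question about the endpoint height is settled by the first integral $\cos^2\theta=\frac{1+2y_0}{y_0^2}\,y^2-2y$ (obtained from $dy/du=y/(2(y+u))$ with $u=\cos^2\theta$), which gives $y_*=2y_0^2/(1+2y_0)>0$.
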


\begin{proof}
 From Proposition \ref{prg2}, we know that $\gamma$  is a graph on the $x$-axis. Let $y=y(x)$.   Due to Lemma \ref{lem-conformal-angles}, the angles between the tangent of $\gamma$ and the $x$-axis approach $\pm \pi/2$ at each of the ends. This implies that the domain of $y (x)$ is bounded, say,  of the form $(x_m, x_M)$ and that the tangents of $y(x)$ at the limit endpoints is $\infty$. To check the concavity of $y(x)$, the chain rule gives $y'(x)=\tan\theta$ and 
 $$y''(x)=\frac{\theta'}{\cos^3\theta}=-\frac{y+\cos^2\theta}{y\cos^4\theta}<0.$$
\end{proof}

%%%%%%
\subsection{Solutions of \eqref{system-conformal-reduced} with initial condition $\theta(0)> 0$.}

Consider  an orbit  $\alpha_\gamma = (y,\theta)$   of   \eqref{system-conformal-reduced} with the initial conditions $y(0) = y_0 > 0$ and $\theta(0) = \theta_0 > 0$. Then  $\theta'(0) = - \frac{1}{\cos\theta_0} - \frac{\cos\theta_0}{y_0} < 0$ and from Lemma \ref{le64}, $\theta(s)<0$. This implies that $y(s)$ is  increasing. Let $\lim_{s \to s_m} \alpha_\gamma(s) = (y_\ast, \theta_\ast)$. By   symmetry, we also have $\lim_{s \to s_M} \alpha_\gamma(s) = (y_\ast, - \theta_\ast)$.

We describe the conformal solitons in this case $\theta(0)$, proving that these conformal solitons fall in the family of Theorem \ref{t67}.  

\begin{theorem}
   Let $\gamma$  be a conformal soliton of the ICSF solving \eqref{system-conformal} with initial conditions $(x(0),y(0),\theta(0))=(0,y_0,\theta(0))$, with $\theta(0)>0$. Then  $\gamma$  can be reparametrized such that $\theta(0)=0$. In consequence, its geometric properties are described in Theorem \ref{t67}. 
 \end{theorem}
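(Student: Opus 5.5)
The strategy is to show that the orbit $\alpha_\gamma$ crosses the line $\theta=0$ at some parameter $s_0$, and then to translate the parameter. The system \eqref{system-conformal} is autonomous, so the curve $\tilde\gamma(s)=\gamma(s+s_0)-(x(s_0),0)$ is again a solution. Its initial data are $(0,y(s_0),0)$. Hence $\tilde\gamma$ is, up to a horizontal translation and a shift of parameter, exactly the same curve as $\gamma$, and Theorem \ref{t67} applies to it. (In Theorem \ref{t67} the initial angle should be read as $0$, which is the case treated in the preceding subsection.) So the whole proof reduces to showing that $\theta$ vanishes somewhere on $(s_m,s_M)$.

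To find the zero, I use Lemma \ref{le64}. We have $\theta'(0)<0$, so $\theta'<0$ on all of $I=(s_m,s_M)$, and $\theta$ is strictly decreasing there. I argue by contradiction and suppose $\theta(s)>0$ for all $s\in(0,s_M)$. On that interval $y'=\sin\theta>0$, so $y\ge y_0>0$ and $y$ is increasing. Also $\theta$ decreases to some limit $\theta_1\in[0,\theta_0)$. From the third equation of \eqref{system-conformal-reduced},
$$\theta'=-\frac{1}{\cos\theta}-\frac{\cos\theta}{y}\le -1 .$$
Hence $\theta(s)\le\theta_0-s$, and so $s_M\le\theta_0<\pi/2$ is finite.

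I then need to rule out blow-up at $s_M$. Since $\theta\in(\theta_1,\theta_0]\subset[0,\pi/2)$, the quantity $\cos\theta$ stays bounded away from $0$. Since $|y'|\le1$ and $s_M<\infty$, $y$ stays bounded, and it stays bounded below by $y_0$. Therefore $(y,\theta)$ remains in a compact subset of $\{y>0,\ \theta\in(-\pi/2,\pi/2)\}$ as $s\to s_M$. On such a set the right-hand side of the system is smooth, so the solution extends past $s_M$. This contradicts maximality. Hence there is $s_0\in(0,s_M)$ with $\theta(s_0)=0$, and it is unique because $\theta$ is strictly monotone.

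The main obstacle is this extension step. One must make sure that the orbit cannot escape through $y\to0$ or through $\theta\to\pm\pi/2$ before $\theta$ reaches $0$. The bound $y\ge y_0$, which comes from the monotonicity of $y$ while $\theta>0$, together with $\theta<\theta_0<\pi/2$, settles both possibilities. After $s_0$ we can invoke Proposition \ref{prop:sym} and Lemma \ref{lem-conformal-angles}, so the behaviour of $\tilde\gamma$ is symmetric about $x=x(s_0)$ and both ends become vertical, which is exactly the description in Theorem \ref{t67}.
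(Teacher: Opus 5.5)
Your proof is correct. It reaches the key fact, that $\theta$ actually vanishes on $(0,s_M)$, by a different route from the paper.

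The paper asserts $\lim_{s\to s_M}\theta(s)=-\theta_\ast<0$ ``by symmetry'' and then applies the intermediate value theorem to the decreasing function $\theta$. But the reflection $(y(s),\theta(s))\mapsto(y(-s),-\theta(-s))$ sends the orbit through $(y_0,\theta_0)$ to the orbit through $(y_0,-\theta_0)$. It maps an orbit to itself only if that orbit already meets $\theta=0$. So the paper's step presupposes exactly what the theorem is meant to prove.

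Your argument avoids this circularity. The inequality $\theta'\le -1$ holds on the whole phase space, since $\cos\theta>0$ and $y>0$. If $\theta$ stayed positive, this would force $s_M\le\theta_0$. The orbit would then remain in the compact set $[y_0,y_0+\theta_0]\times[0,\theta_0]$ of $\{y>0,\ |\theta|<\pi/2\}$, which contradicts maximality by the standard escape lemma.

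Your route costs a few more lines, but it is self-contained and non-circular. It also gives the quantitative bound $0<s_0\le\theta_0$ for the crossing parameter. Your reading of the initial angle in Theorem \ref{t67} as $0$ rather than $\pi/2$ is also the right one.
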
 
\begin{proof}
It suffices to prove that there exists some $\overline{s} > 0$ such that $\theta(\overline{s}) = 0$.
    At $s = 0$ we have $\theta(0) = \theta_0 > 0$. Since $\lim_{s \to s_M} \theta(s) = - \theta_\ast < 0$, and using that $\theta$ is a decreasing function, there is   $\overline{s} > 0$ such that $\theta(\overline{s}) = 0$. Hence, $\alpha_\gamma$ can be considered as an orbit with initial condition $\theta(0) = 0$.  
\end{proof}

In  Fig. \ref{fig:conformal-solitons}, we show the plots of conformal solitons with varying initial conditions $\gamma(0)$.

\begin{figure}[h!t]
 \begin{center}
 \includegraphics[width=.4\textwidth]{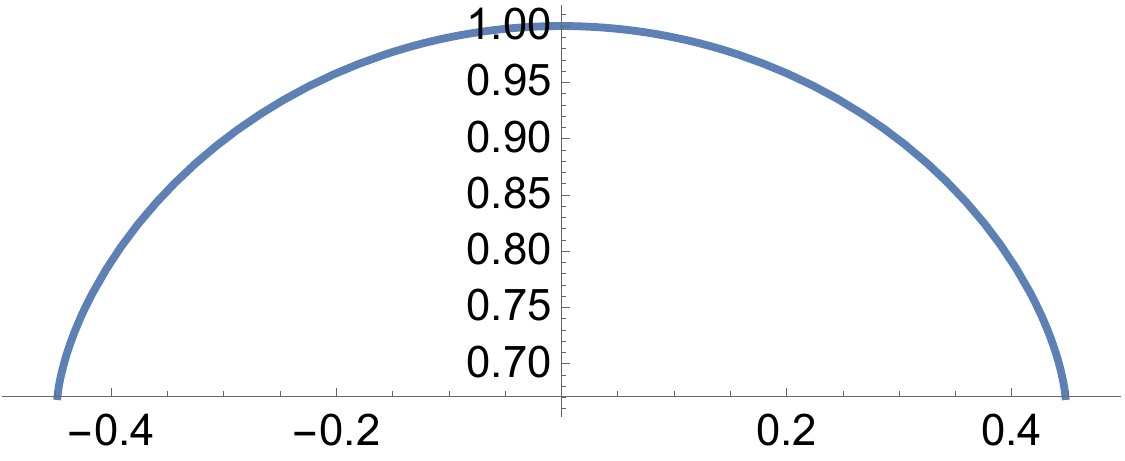}\qquad\qquad \includegraphics[width=.4\textwidth]{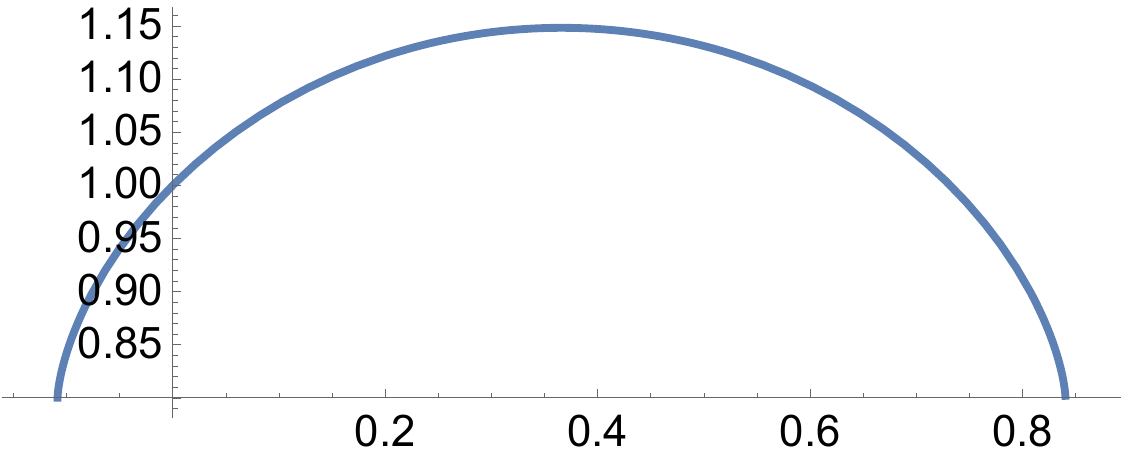} 
\end{center}
\caption{Conformal solitons of the ICSF. In both cases, $y(0)=1$, with  $\theta(0)=0$ (left) and $\theta(0)=\pi/4$ (right).   }
\label{fig:conformal-solitons}
\end{figure}
 
%%%%%%%%%%%%%%%%%%%
 \section*{Ethics declarations}

The authors declare that they do not have any conflict of interest.

The authors have no relevant financial or non-financial interests to disclose.

No datasets were generated or analysed during the current study.

%%%%%%%%%%%%%%%%%%%%
 \section*{Acknowledgements}
Rafael L\'opez has been partially supported by MINECO/MICINN/FEDER grant no. PID2023-150727NB-I00, and by the ``Mar\'{\i}a de Maeztu'' Excellence Unit IMAG, reference CEX2020-001105- M, funded by MCINN/AEI/10.13039/ 501100011033/ CEX2020-001105-M.
%%%%%%%%%%%%%%%%%%%%%%%%
%%%%%%%%

\

\begin{thebibliography}{99}
%\bibliographystyle{plain}

%\bibitem{abcl}B. Andrews, B. Chow, C. Guenther, M. Langford. Extrinsic geometric flows. American mathematical society (206), 20202.


%\bibitem{al} U. Abresch, J. Langer, The normalized curve shortening flow and homothetic solutions. J. Differential Geom. 23, (1986), 175--196.

\bibitem{bda} B. D. Allen, Non-compact solutions to inverse mean curvature flow in hyperbolic space. Ph.D. thesis, University of Tennessee, Knoxville, 2016.

\bibitem{bl1} A. Bueno, R. L\'opez, Horo-shrinkers in the hyperbolic space. Taiwanese J. Math. 29 (2025), 1037--1059.

\bibitem{bl2} A. Bueno, R. L\'opez, The class of grim reapers in $H^2\times R$. J. Math. Anal. Appl. 541 (2025), 128730.

\bibitem{cl} I. Castro, A. M. Lerma,  Lagrangian homothetic solitons for the inverse mean curvature flow. Results Math. 71 (2017), 3--4.

\bibitem{cd} B. Choi, P. Daskalopoulos, Evolution of non-compact hypersurfaces by inverse mean curvature. Duke Math. J. 170 (2021), 2755--2803.

\bibitem{ch} B. Choi, P.K. Hung, Inverse mean curvature flow with singularities. Int. Math. Res. Not. IMRN 10 (2023), 8683--8702. 

%\bibitem{cz} K-S. Chou, X-P. Zhu, The curve shortening problem, Chapman-Hall/CRC, Boca Raton, FL, 2001.


%\bibitem{dt2} F. N. Da Silva, K. Tenenblat, Soliton solutions to the curve shortening flow on the 2-dimensional hyperbolic space. Rev. Mat. Iberoam. 38 (2022), 1763--1782.

\bibitem{dhw} G. Drugan, L. Hojoo, G. Wheeler, Solitons for the inverse mean curvature flow. Pacific J. Math.  284 (2016), 309--326.

%\bibitem{dt} H. F. S. Dos Reis, K. Tenenblat, Soliton solutions to the curve shortening flow on the sphere. Proc. Amer. Math. Soc. 147 (2019), 4955--4967.




%\bibitem{eg} C. L. Epstein, M. Gage, The curve shortening flow. In: Chorin A.J., Majda A.J. (eds) Wave Motion: Theory, Modelling, and Computation. Math. Sci. Res. Inst. Publ.  vol 7. Springer, (1987).

\bibitem{ga} M. E. Gage, Curve shortening makes convex curves circular. Invent. Math.  76 (1984), 357--364.

\bibitem{ge} C. Gerhardt, Flow of nonconvex hypersurfaces into spheres. J.   Differential Geom. 32 (1990), 299--314.

\bibitem{ge2} C. Gerhardt, Curvature problems. Ser. Geom. Topol. vol. 39. International Press, Boston. 2006.

\bibitem{ge3} C. Gerhardt, Inverse curvature flows in hyperbolic space. J. Differential Geom. 89 (2011), 487--527.



%\bibitem{gr} M. A. Grayson, Shortening embedded curves. Ann. of Math. (2), 129 (1989), 71--111.
 
%\bibitem{ha1} H. P. Halldorsson, Self-similar solutions to the curve shortening flow. Trans. Amer. Math. Soc. 364, (2012), 5285--5309.

%\bibitem{ha2} H. P. Halldorsson, Self-similar solutions to the mean curvature flow in the Minkowski plane $R^1_1$. J. Reine Angew. Math. 704 (2015), 209--243.
\bibitem{hi} G. Huisken, T. Ilmanen, The inverse mean curvature flow and the Riemannian Penrose inequality. J.    Differential Geom. 59 (2001), 353--437.

\bibitem{hw} P.-K. Hung, M.-T. Wang, Inverse mean curvature flows in the hyperbolic 3-space revisited. Calc. Var. Partial Differential Equations 54 (2015),   119--126.

\bibitem{kp} D. Kim, J. Pyo, Remarks on solitons for inverse mean curvature flow. Math. Nach. 293 (2020), 2363--2369.

\bibitem{lw} H. Li, Y.  Wei, On inverse mean curvature flow in Schwarzschild space and Kottler space. Calc. Var. Partial Differential Equations 56 (2017), no. 3, Paper No. 62.




 %\bibitem{lrm} R. F. de Lima, A. K. Ramos, J. P. dos Santos,  Solitons to mean curvature flow in the hyperbolic $3$-space. arXiv preprint arXiv:2307.14136, 2023.
 
 \bibitem{lu} S. Lu, Inverse curvature flow in anti-de Sitter-Schwarzschild manifold. Comm. Anal. Geom. 27 (2019), 465--489. 


 \bibitem{ma} L. Mari, J. R. Oliveira, A. Savas-Halilaj, R. Sodr\'e de Sena, Conformal solitons for the mean curvature flow in hyperbolic space. Ann. Glob. Anal. Geom. 65 (2024), 19. 
 
 \bibitem{ns} G. S. Neto, V. Silva,   Classification of ruled surfaces as homothetic self-similar solutions of the inverse mean curvature flow in the Lorentz-Minkowski 3-space. Bull. Braz. Math. Soc. New Ser. 54, Art. 48 (2023).
 
 \bibitem{pi1} G. Pipoli, Inverse mean curvature flow in quaternionic hyperbolic space. Atti Accad. Naz. Lincei Rend. Lincei Mat. Appl. 29 (2018), 153--171.

\bibitem{pi2} G. Pipoli,  Inverse mean curvature flow in complex hyperbolic space. Ann. Scient. \`{E}c. Norm. Sup. 52 (2019),  1107--1135.

\bibitem{sc} J. Scheuer, The inverse mean curvature flow in warped cylinders of non-positive radial curvature. Adv. Math.   306 (2017), 1130--1163.




%\bibitem{wx} E. Woolgar, R. Xie, Self-similar curve shortening flow in hyperbolic 2-space. Proc. Amer. Math. Soc. 150 (2022), 1301--1319.
 
 \bibitem{zh} H. Zhou,   Inverse mean curvature flows in warped product manifolds. J. Geom. Anal. 28 (2018), 1749--1772.



\end{thebibliography}
\end{document}